\documentclass[11pt]{article}
\usepackage[T1]{fontenc}
\usepackage{lmodern}
\usepackage[margin=1in]{geometry}
\usepackage{amsmath,amssymb,amsthm,mathtools}
\usepackage{microtype}
\usepackage[colorlinks=true,linkcolor=blue,citecolor=blue,urlcolor=blue]{hyperref}
\usepackage[
    backend=biber,
    style=numeric,
    sorting=nyt,
    doi=false,
    url=false,
    eprint=true,
    maxbibnames=99
]{biblatex}
\DeclareFieldFormat[article,misc,inproceedings,incollection,unpublished]{title}{#1\isdot}
\newtheorem{theorem}{Theorem}[section]
\newtheorem{proposition}[theorem]{Proposition}
\newtheorem{lemma}[theorem]{Lemma}

\theoremstyle{remark}

\DeclareMathOperator{\Var}{Var}
\newcommand{\E}{\mathbb E}
\newcommand{\R}{\mathbb R}
\newcommand{\Prob}{\mathbb P}
\newcommand{\ind}{\mathbf 1}
\numberwithin{equation}{section}

\title{\huge Unimodality of Independence Polynomials 
for \\Sufficiently Large Forests}
\author{
	Ethan X. Fang\thanks{Department of Biostatistics \& Bioinformatics, Duke
	University, Durham, NC 27710, USA. Email: \texttt{ethan.fang@duke.edu}.}
	\quad
    Junwei Lu\thanks{Department of Biostatistics, Harvard T.H. Chan School of
	Public Health, Boston, MA 02115, USA. Email: \texttt{junweilu@hsph.harvard.edu}.}
    \quad
     Eran Nevo\thanks{Einstein Institute of Mathematics, Hebrew University, Jerusalem 91904, Israel and Institute of Mathematics, Universidad de Valladolid, Valladolid 47011, Spain. Email: \texttt{nevo@math.huji.ac.il}.}
     \quad 
	Yuan Yao\thanks{Department of Biostatistics, Harvard T.H. Chan School of
	Public Health, Boston, MA 02115, USA. Email: \texttt{yyao1@hsph.harvard.edu}.}
	\quad
	Hailun Zheng\thanks{Department of Mathematics, University of Hawai`i at
	M\={a}noa, 2565 McCarthy Mall, Honolulu, HI 96822, USA. Email:
	\texttt{hailunz@hawaii.edu}}
}
\date{}
\begin{document}
\maketitle

\begin{abstract}
We prove that the independence sequence of every sufficiently large
forest is unimodal. 
The result follows from establishing log-concavity on
a central interval of the sequence, along with
monotonicity of the initial and final segments.
The main analytic step is a central limit theorem for the size of a
random independent set sampled with the hard-core model, uniform over all forests and over an interval
of positive fugacities. 
\end{abstract}

\section{Introduction}

For a finite simple graph \(G\), an \emph{independent set}
is a subset of vertices, no two of which are connected by an edge.
Let \(\mathcal I(G)\) be the family
of all independent sets in $G$, let \(\alpha(G)\) be the maximum size of an independent set in $G$, and
write
\[
 Z_G(x)=\sum_{I\in\mathcal I(G)}x^{|I|}
       =\sum_{k=0}^{\alpha(G)}i_k(G)x^k
\]
for its \emph{independence polynomial}, 
where $i_k(G)$ is the number of independent sets of size $k$ in $G$.
The sequence of coefficients
$(i_0(G),\ldots,i_{\alpha(G)}(G))$ is \emph{unimodal} if it is weakly
increasing up to some index and weakly decreasing thereafter. It is
\emph{log-concave} if
\(i_k(G)^2\ge i_{k-1}(G)i_{k+1}(G)\) at each index $1<k<\alpha(G)$.
Since these coefficients are positive, log-concavity implies
unimodality.

Alavi, Malde, Schwenk, and Erd\H{o}s~\cite{AMSE} showed that independence
sequences of 
graphs can have 
arbitrary increasing and decreasing behavior,
and asked whether the sequence is nevertheless unimodal for every
tree and every forest. Note that the forest version of this problem  
does not follow from 
the tree version: the independence polynomial of a graph is the product of 
independence polynomials
over its 
connected components, but unimodality is not preserved under convolution.

In this paper, we resolve the unimodality conjecture
for all sufficiently large forests.

\begin{theorem}\label{thm:main}
There is an absolute positive integer \(N_0\) such that the independence
sequence of every forest with at least \(N_0\) vertices is unimodal.
\end{theorem}

Prior to this work, unimodality of independence polynomials has been established for many specific families of graphs:
Hamidoune~\cite{Hamidoune} showed that all claw-free graphs have unimodal independence sequences.
Zhu~\cite{Zhu}, Wang and Zhu~\cite{WZ}, Galvin and Hilyard~\cite{GH}, and Bahls, Ethridge and Szabo~\cite{BES} proved unimodality for caterpillar graphs and several variants.
More recently, Li, Li, Yang, and Zhang~\cite{LLYZ, Li} proved unimodality for spider graphs and a few other parameterized families by using chromatic symmetric functions. The independence polynomials of some of these families are shown to be even log-concave. It is verified computationally that all trees with at most $25$ vertices always have log-concave independence sequences; this is communicated by Radcliff in~\cite{BGHW}.

There are also results that establish monotonicity for the two ends of the independence sequence. 
Levit and Mandrescu~\cite{LM} proved that the independence sequence of a K\"onig--Egerv\'ary graph is decreasing from index \(\lceil(2\alpha-1)/3\rceil\) onward; in particular, this property holds for every forest.
Basit and Galvin~\cite{BG} strengthened the decreasing-range result
for general graphs and obtained longer increasing and decreasing
ranges for random trees. 

Levit and Mandrescu~\cite{LM2} further conjectured that the independence sequence is always log-concave for forests.
However, Kadrawi, Levit, Yosef, and Mizrachi~\cite{KL, KLYM} found the first
\(26\)-vertex counterexamples to log-concavity and constructed
several infinite families of counterexamples.
Ramos and Sun~\cite{RS} found a large number of counterexamples by
machine-learning-assisted search, while Bautista-Ramos, Guill\'en-Galv\'an, and G\'omez-Salgado~\cite{BGG} constructed
families with log-concavity failures at multiple consecutive indices. In all these counterexamples, the non-log-concave indices 
are concentrated towards the top end of the sequence (near $\alpha(G)$), 
although Galvin~\cite{Galvin} showed that such failures need not
remain within a bounded distance of the final coefficient. 

Nevertheless, we show that log-concavity holds
on a central interval:
\begin{theorem}\label{thm:central}
For every sufficiently large integer \(n\), every forest \(F\) on
\(n\) vertices satisfies
\[
 i_k(F)^2>i_{k-1}(F)i_{k+1}(F)
 \qquad\text{if}\qquad
 \frac n5
 \le k\le
 \frac{17\alpha(F)}{25}.
\]
\end{theorem}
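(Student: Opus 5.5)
The plan is to derive log-concavity at a fixed index $k$ from a local central limit theorem for the size of a hard-core random independent set at a fugacity $\lambda$ chosen so that the mean is $k$.

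For $\lambda>0$, let $I_\lambda$ be distributed as $\Prob(I_\lambda=I)=\lambda^{|I|}/Z_F(\lambda)$ and set $X=|I_\lambda|$. Then
\[
 \frac{i_k^2}{i_{k-1}i_{k+1}}=\frac{\Prob(X=k)^2}{\Prob(X=k-1)\Prob(X=k+1)} ,
\]
and this ratio does not depend on $\lambda$. So we are free to pick $\lambda$ with $\mu(\lambda):=\E X=k$ (more precisely, within $1/2$ of $k$). The map $\lambda\mapsto\mu(\lambda)$ is continuous and strictly increasing, since $\lambda\mu'(\lambda)=\Var X$. The first task is to show that the range $n/5\le k\le 17\alpha/25$ corresponds to $\lambda$ in a fixed compact interval $[\lambda_0,\lambda_1]\subset(0,\infty)$, uniformly over all forests. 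This uses the component and tree structure:
\begin{itemize}
\item a lower bound $\mu(\lambda)\le n\lambda/(1+\lambda)$, which forces $\lambda\ge\lambda_0$ when $k\ge n/5$ (roughly $\lambda_0\approx 1/4$);
\item an upper bound on $\mu(\lambda)$ in terms of $\alpha$ for bounded $\lambda$, for instance via a maximum matching or the K\"onig--Egerv\'ary structure of forests, showing that $\mu(\lambda_1)\ge 17\alpha/25$ for some absolute $\lambda_1$.
\end{itemize}
The second item is where the constant $17/25$ enters, and I expect it to need some care.

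Next, on $[\lambda_0,\lambda_1]$ we need $\Var X\ge c n$ uniformly. The route is to decompose $X$ through a conditioning argument. Choose linearly many vertex-disjoint local gadgets in $F$: leaves together with their parents, or isolated vertices. Given the configuration outside a gadget, each gadget contributes a nondegenerate independent Bernoulli-type variable. Because $F$ is a forest there are $\Omega(n)$ leaves or short pendant paths (or the forest is nearly a perfect matching, which we handle directly), and the conditional-variance decomposition then gives $\Var X\ge cn$.

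With the variance bound in place we prove a local CLT:
\[
 \Prob(X=j)=\frac{1+o(1)}{\sqrt{2\pi\sigma^2}}\exp\!\Bigl(-\frac{(j-\mu)^2}{2\sigma^2}\Bigr)
 \quad\text{uniformly for } |j-\mu|\le 2,
\]
with an explicit error of order $O(n^{-1})$ relative to the main term. The natural tool is the zero-free region: the roots of $Z_F$ are real and negative only for claw-free graphs, so we cannot use that. Instead we use the characteristic function $\E e^{itX}$. A tree recursion (the ratio recursion for the hard-core model on trees) gives $|\E e^{itX}|\le e^{-cnt^2}$ for $|t|\le\pi$, using the same gadgets. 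We also get cumulant bounds $\kappa_3=O(n)$ and $\kappa_4=O(n)$ from correlation decay on trees at bounded fugacity; on trees this is uniform in $\lambda$ on compacts by the contraction of the recursion, or alternatively by splitting into independent pieces. An Edgeworth expansion to second order then yields
\[
 \frac{\Prob(X=k)^2}{\Prob(X=k-1)\Prob(X=k+1)}
 =\exp\!\Bigl(\frac1{\sigma^2}+O(n^{-3/2})\Bigr),
\]
which exceeds $1$ for large $n$ because $\sigma^2=\Theta(n)$.

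\textbf{Main obstacle.} The crude local CLT error $O(n^{-1/2})$ is not enough: the log-concavity gap is only of order $1/\sigma^2\sim 1/n$. We therefore need the Edgeworth expansion with the skewness correction, which is antisymmetric in $j-\mu$ and cancels in the ratio, together with a uniform characteristic-function bound on all of $|t|\le\pi$, including away from $0$, so that lattice effects are controlled. Establishing that uniform decay and the cumulant bounds simultaneously for all forests is the crux.
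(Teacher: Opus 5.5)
There is a genuine gap in the analytic core of your plan. You want $\kappa_3,\kappa_4=O(n)$ ``from correlation decay on trees \ldots\ uniform in $\lambda$ on compacts by the contraction of the recursion''. No such contraction holds uniformly over forests. The fixed point of $r\mapsto\lambda(1+r)^{-d}$ is unstable as soon as $\lambda>d^d/(d-1)^{d+1}\approx e/d$, so for every fixed $\lambda>0$, trees of large degree are in the non-uniqueness regime. Influences still decay along each path, since they are products of occupation probabilities $q_j\le\lambda/(1+\lambda)$. But their sum need not stay bounded: in the recursion $\delta_v=1-\sum_i q_i\delta_i$ for the root-conditioning mean shift, the shift is amplified at every level by roughly $\sum_i q_i$, which can exceed $1$. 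This is why the paper only proves $q_v(1-q_v)\delta_v^2\le Cn_v^a$ with $a\in(2/3,1)$, and only for $\lambda\le 12<e^{5/2}$.

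A root with $q_v$ bounded away from $0,1$ and $|\delta_v|$ of order $n^{a/2}$ puts the term $q_v(1-q_v)(1-2q_v)\delta_v^3\gg n$ into $\kappa_3$ (law of total cumulance). Heuristically this does happen. Take trees of branching $5$, held near the unstable fixed point of $r\mapsto 12(1+r)^{-5}$ by small pendant gadgets. They have $\sum_i q_i\approx 2.1$, $|\delta_{\mathrm{root}}|\approx n^{0.47}$ and $|\kappa_3|\approx n^{1.4}$. So your second-order Edgeworth expansion with $O(n^{-3/2})$ error has no basis.

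The same obstruction sits in steps you treat as routine. You use $\sigma^2=O(n)$ but never prove it; the paper gets it only from these root-moment bounds via a centroid-decomposition martingale. Your $\lambda_1$ with $\mu(\lambda_1)\ge 17\alpha/25$ must also lie where those estimates hold, which needs a quantitative argument. The paper uses $\log Z_F(1)\le 3\log 2\cdot\E_1X$, then $\lambda Q'(\lambda)\le 1-Q/(2(1+\lambda))$ for $Q=\log Z_F/\mu$ (from $\sigma^2\ge\mu/(2(1+\lambda))$), and finally $Z_F(12)\ge 13^{\alpha}$.

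Your gadget lower bound for $\sigma^2$ also fails as stated: a star has $n-1$ leaves but only one vertex-disjoint leaf--parent pair. Conditioning on $I\cap R$ for a bipartition $L\sqcup R$ repairs it. Available vertices of $L$ are then independent Bernoulli$(\lambda/(1+\lambda))$, each available with probability at least $(1+\lambda)^{-d(v)}$. For the characteristic function you must still control $\E h^{B}$ for the random number $B$ of available vertices. The paper does this by integrating the fugacity on $L$.

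More to the point, the obstacle you single out can be bypassed entirely. Choose $\lambda$ with $\mu=k$ exactly and apply lattice Fourier inversion to the second difference, not to each $p_j=\Prob(X=j)$:
\[
 \sigma^3\bigl(2p_k-p_{k-1}-p_{k+1}\bigr)
 =\frac1{2\pi}\int_{-\pi\sigma}^{\pi\sigma}2\sigma^2\bigl(1-\cos(t/\sigma)\bigr)\chi(t)\,dt,
 \qquad \chi(t)=\E e^{it(X-\mu)/\sigma}.
\]
The multiplier is even, so skewness and all other odd corrections drop out automatically. It is at most $t^2$ and tends to $t^2$. You then need only two inputs: the plain uniform CLT, and $|\chi(t)|\le e^{-c't^2}$ on $|t|\le\pi\sigma$, which follows from $|\E e^{itX}|\le e^{-cn\sin^2(t/2)}$ and $\sigma^2\le Cn$. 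Dominated convergence gives the limit $1/\sqrt{2\pi}$. Then $2p_k>p_{k-1}+p_{k+1}\ge 2\sqrt{p_{k-1}p_{k+1}}$, which is strict log-concavity at $k$. No cumulant beyond the variance is needed, and no $o(1/n)$-precise ratio.

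The real work is a CLT uniform over all forests without correlation decay. The paper gets it from the martingale increments $\delta_j(\xi_j-q_j)$ and $\gamma_j(\xi_j-q_j)-q_j(1-q_j)\delta_j^2$ along a centroid decomposition. The sublinear root-moment bounds suffice because $\sum_j n_j^a=O(nb^{a-1})$ and $\sum_j n_j^{2a}=O(n^{2a})$ along every history.
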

The constants in this theorem are not optimal; in particular, $n/5$ can be replaced by $\varepsilon n$ for any fixed $\varepsilon >0$ at a cost of increasing the lower bound on $n$. (See Section~\ref{sec:Conclude} for further discussions.) The constants here are chosen to allow us to conclude Theorem~\ref{thm:main} when combined with monotonicity of 
suitable initial and terminal intervals 
of the independence sequence.

The proof of Theorem~\ref{thm:central} uses a probabilistic framework based on the hard-core model. For a given fugacity \(\lambda>0\),
an independent set \(I\) is sampled with probability proportional to
\(\lambda^{|I|}\). We study the probability that the sampled independent set has size $k$
to obtain information about the coefficients of the independence polynomial. The proof follows a central-to-local limit approach: we first establish a central
limit theorem uniformly over all forests, then refine this global picture of the 
independent set 
size distribution to compare neighboring coefficients 
in the independence sequence 
and prove log-concavity in a central range of its indices. This approach builds on a long tradition of using normal and local limit
theorems in asymptotic enumeration; see Bender~\cite{Bender} for early
examples. In the setting of Gibbs measures, Dobrushin and Tirozzi~\cite{DT} developed a method of passing from central to local limit theorems using conditional independence. More recently, Jain,
Perkins, Sah, and Sawhney~\cite{JPSS} used a similar strategy to efficiently estimate $i_k(G)$ for bounded-degree graphs, combining
normal approximation, characteristic-function estimates, and
appropriate choices of fugacity.
In this paper, we bypass the bounded-degree
condition 
needed in~\cite{JPSS} 
via moment bounds (see Proposition~\ref{prop:root-moments})
derived directly from the standard
recursion on a rooted tree, same as the one underlying the work of
Weitz~\cite{Weitz} in computing the probability that a given vertex is included in the independent set. 

The paper is organized as follows: We first sketch the overall proof in Section~\ref{sec:sketch}, then set up the hard-core model in Section~\ref{sec:hardcore}. In Section~\ref{sec:root} we bound the impact of revealing a vertex to the mean and variance. In Section~\ref{sec:clt} we use these bounds to show a central limit theorem, which is then upgraded 
in Section~\ref{sec:fourier} 
to a bound on the characteristic function, 
which in turn is used in 
Section~\ref{sec:mean} we prove the central-interval log-concavity claimed in Theorem~\ref{thm:central}. In Section~\ref{sec:ends} we prove the monotonicity of the end segments needed for Theorem~\ref{thm:main}. In Section~\ref{sec:Conclude} we end with some concluding remarks.

\vspace{-15pt}

\paragraph{The role of AI.} The Odin Automatic AI Research Agent was used to find the initial proof. The final content and proofs in this paper were written and verified by the authors. The Lean code formalizing the proof of Theorem~\ref{thm:main} is in the GitHub repository \url{https://github.com/junwei-lu/Erdos_993_Tree_Independent_Set_Unimodality}.

\vspace{-15pt}

\paragraph{Acknowledgements.} We would like to thank Jes\'{u}s A.\ De Loera for his useful suggestions for an earlier draft of this paper.

\section{Proof sketch}\label{sec:sketch}

Let \(X\) be the size of a random independent set 
sampled under the hard-core model on a forest $F$ with \(n\)
vertices, where a set $I$ is sampled with probability proportional to $\lambda^{|I|}$, and denote the mean and variance of $X$ by \(\mu\) and \(\sigma^2\) resp.
The fugacity parameter $\lambda$ will lie in the fixed interval \([1/4,12]\).

\vspace{-15pt}

\paragraph{Normal approximation.}
Conditioning on a root's occupation status
splits a (rooted) tree into smaller trees, but may substantially change the expected 
size of an independent set.
We prove that the square of this change, multiplied by the variance
of the root indicator, is \(O(n^a)\) on an $n$-vertex rooted tree, for a fixed \(a<1\).
We also bound the square of the difference of conditional variances,
multiplied by the variance of the root indicator, by \(O(n^{2a})\); see Eq. (\ref{eq:root-moments}).

We then use a variant of the classic centroid decomposition on each connected component of a forest. 
We reveal the occupation of a centroid, 
delete it or its neighborhood based on the occupation, 
and repeat in the remaining
components until their sizes are at most
\(\lceil n^{1/4}\rceil\).
Component sizes decrease exponentially along every chain of this decomposition, 
allowing us to show that 
the terminal conditional mean differs from \(\mu\) by
\(o(\sqrt n)\) in \(L^2\), and the conditional variance differs
from \(\sigma^2\) by \(o(n)\) in \(L^1\).
Conditioning
on the reveals, \(X\) is
a sum of independent bounded random variables and a known integer.
A Berry--Esseen bound, followed by a comparison of normal
distributions, proves a uniform central limit theorem; see Eq. (\ref{eq:clt}).

\vspace{-15pt}

\paragraph{Bounding characteristic function.}
Weak convergence alone gives no sign information about three
adjacent coefficients. We therefore bound the characteristic
function at all frequencies. Conditional on one side of a
bipartition, available vertices on the other side are independent
Bernoulli variables. Integrating a marginal-occupation inequality
as the fugacity $\lambda$ on that side varies gives a uniform upper bound, see Eq. (\ref{eq:fourier}):
\[
 \bigl|\E_{\lambda} e^{itX}\bigr|\le \exp\{-c n\sin^2(t/2)\}.
\]
Together with \(\sigma^2\asymp n\), this is a Gaussian bound after
standardization. Fourier inversion and the central limit theorem
then imply, whenever the fugacity is chosen so that \(\mu=k\)
is an integer,
\[
 \sigma^3\bigl(2\Prob(X=k)-\Prob(X=k-1)-\Prob(X=k+1)\bigr)
 \longrightarrow \frac1{\sqrt{2\pi}}
\]
uniformly 
over all forests with $n$ vertices and all fugacities in the fixed interval, as $n\rightarrow \infty$; see Eq. (\ref{eq:curvature-limit}).
The positive limit gives strict log-concavity of probabilities at \(k\),
which translates to log-concavity $i_k(F)>i_{k-1}(F)i_{k+1}(F)$ in the claimed central interval of the independence sequence.

\vspace{-15pt}

\paragraph{Bounding mean and monotone intervals.}
The mean of $X$ increases continuously with the fugacity $\lambda$. At
\(\lambda = 1/4\), the mean is at most \(n/5\). At \(\lambda = 12\), a comparison of
\(\log Z_F\), the mean, and the variance shows that the mean exceeds
\(17\alpha(F)/25\).
Thus the preceding argument proves Theorem~\ref{thm:central}.

To finish, we prove that the independence sequence of any forest is increasing
through \(\lceil n/4\rceil\), using a bound on the average number of incident edges to $I$ and extension double counting. 
The sequence is decreasing starting at \(\lceil(2\alpha(F)-1)/3\rceil\), by the classical
result of Levit and Mandrescu, for which we include a short
proof for the bipartite case. For sufficiently large \(n\), the log-concave
interval overlaps both 
of these monotone intervals, 
so patching the three intervals together completes the proof of Theorem~\ref{thm:main}.

\section{The hard-core model setup}\label{sec:hardcore}

Let $G$ be a finite simple graph. We write \(d_G(v)\) for the degree
of \(v\) in \(G\).
For a vertex set \(A\), let \(N(A)\) be its open neighborhood and
\(N[A]=A\cup N(A)\); in particular, \(N[v]=N(\{v\})\cup\{v\}\).
Deleting a vertex set always means taking the induced graph on
its complement. 

For a given \emph{fugacity parameter} \(\lambda>0\), sample an independent set $I$ 
from the collection $\mathcal{I}(G)$ of all independent sets 
according to the distribution
\begin{equation}\label{eq:measure}
 \Prob_\lambda(I=J)=\frac{\lambda^{|J|}}{Z_G(\lambda)}
 \quad(J\in\mathcal I(G)),
 \quad \text{where } Z_G(\lambda):= \sum_{J\in \mathcal I(G)} \lambda^{|J|}.
\end{equation}
A vertex is \emph{occupied} if it is in $I$, and \emph{absent} otherwise. We also use 
\[
 X=|I|,\quad \mu=\E_\lambda X,\quad \sigma^2=\Var_\lambda X
\]
to denote the size of this independent set, as well as the mean and variance of the size.
Subscripts may be omitted when the graph and fugacity are fixed.

Let \(D=\lambda\frac{d}{d\lambda}\). Differentiation of the 
partition function gives
\begin{equation}\label{eq:derivatives}
 D\log Z_G=\mu,\qquad D\mu=\sigma^2.
\end{equation}
For a nonempty graph, \(\sigma^2>0\), since both the empty set and
a singleton have positive probability.

If a vertex is absent, its conditional law on the other vertices
is the hard-core model with that vertex deleted. If it is occupied,
its closed neighborhood is deleted and its contribution to \(X\)
is \(1\). Distinct remaining components are independent. Also,
at fugacity \(\lambda\) the occupation probability for each vertex is at most
\(\lambda/(1+\lambda)\): deleting an occupied vertex injects each set into one in which the vertex is absent, with weight
ratio \(\lambda\). The same bound applies after any specified
vertices have been conditioned to be absent. In particular,
\begin{equation}\label{eq:absence}
 \Prob_\lambda\bigl(I\cap A=\varnothing\bigr)
 \ge (1+\lambda)^{-|A|}
\end{equation}
for every set of vertices \(A\), by conditioning on their absence
one at a time.

For the rest of this paper, we assume that the fugacity $\lambda$ is within the fixed interval
\[
 K=[1/4,12].
\]
The choice of constants is mostly for convenience and not necessarily optimal. In fact, the following proofs go through with the lower bound $1/4$ replaced by any constant $\varepsilon \in (0, 1/3)$.

\section{Root conditioning and changes in the first two moments}\label{sec:root}

For a rooted tree $T$, let \(T_v\) be the descendant subtree at vertex \(v\).
Define the \emph{occupation odds} and \emph{occupation probability} of its root
\emph{in \(T_v\)} (i.e.\ we sample a random independent set within $T_v$ only) by
\[
 r_v=\frac{\mathbb{P}_\lambda(v\in I)}{\mathbb{P}_\lambda(v\notin I)}=\frac{\lambda Z_{T_v-N[v]}(\lambda)}
                 {Z_{T_v-v}(\lambda)},\qquad
 q_v=\mathbb{P}_\lambda(v\in I)=\frac{r_v}{1+r_v},
\]
where $N[v]$ is taken in \(T_v\).
Root conditioning gives the usual recursion
\begin{equation}\label{eq:root-recursion}
 r_v=\lambda\prod_i(1+r_i)^{-1},\qquad
 y_v:=\log(\lambda/r_v)=\sum_i\log(1+r_i),
\end{equation}
where \(i\) ranges over the children of \(v\). In particular
\(0<r_v\le\lambda\) and \(y_v\ge0\).
Let \[
\delta_v = \E_\lambda (X\mid v\in I) - \E_\lambda (X\mid v\notin I) 
= 1 + \mu_{T_v-N[v]} - \mu_{T_v-v},\quad
\gamma_v = \Var_\lambda (X\mid v\in I) - \Var_\lambda (X\mid v\notin I)
\]
be the occupied-root mean minus the absent-root
mean of the total size in \(T_v\), and the
corresponding difference of variances respectively.
The definitions and
\eqref{eq:derivatives} show that
\(\delta_v=D\log r_v\) and \(\gamma_v=D^2\log r_v\).
Applying them to the recurrence on $r_v$ gives
\begin{equation}\label{eq:moment-recursion}
 \delta_v=1-\sum_i q_i\delta_i,\qquad
 \gamma_v=-\sum_iq_i\gamma_i
           -\sum_iq_i(1-q_i)\delta_i^2.
\end{equation}
At a leaf, \(\delta_v=1\) and \(\gamma_v=0\).

\begin{proposition}\label{prop:root-moments}
There are constants \(a\in(2/3,1)\) and \(C \in (0, \infty)\) such that for every
rooted tree $T_v$ with \(n_v\) vertices and every \(\lambda\in K\), the root quantities \(q_v,\delta_v,\gamma_v\) satisfy
\begin{equation}\label{eq:root-moments}
 q_v(1-q_v)\delta_v^2\le C n_v^a,\qquad
 q_v(1-q_v)\gamma_v^2\le C n_v^{2a}.
\end{equation}
\end{proposition}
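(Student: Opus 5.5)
The plan is to prove both bounds in \eqref{eq:root-moments} by strong induction on $n_v$, working directly with the recursions \eqref{eq:root-recursion} and \eqref{eq:moment-recursion}. The exponent $a<1$ is fixed close to $1$ first, and $C$ is chosen large afterwards. For the base case, at a leaf we have $\delta_v=1$ and $\gamma_v=0$, so any $C\ge 1$ works. The same holds for all trees with $n_v\le n^\ast$, for a fixed $n^\ast$: there the quantities are continuous in $\lambda$ on the compact set $K$ and there are finitely many trees.

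For the inductive step on $\delta$, let the children of $v$ be $i=1,\dots,k$ with subtree sizes $n_i$, so $\sum_i n_i=n_v-1$. Since $q_i\le\lambda/(1+\lambda)\le 12/13$, we have $(1-q_i)^{-1}\le 13$. Writing $q_i|\delta_i|=\sqrt{q_i/(1-q_i)}\cdot\sqrt{q_i(1-q_i)}\,|\delta_i|$ and applying the induction hypothesis gives
\[
 \sum_i q_i|\delta_i|\le \sqrt{13C}\sum_i \sqrt{q_i}\,n_i^{a/2}.
\]
Next apply H\"older with exponents $p=2/(2-a)$ and $2/a$:
\[
 \sum_i\sqrt{q_i}\,n_i^{a/2}\le \Bigl(\sum_i q_i^{1/(2-a)}\Bigr)^{(2-a)/2}\Bigl(\sum_i n_i\Bigr)^{a/2}.
\]
On the other side, $q_v\le r_v=\lambda e^{-y_v}$, and $y_v=\sum_i\log(1+r_i)\ge \sum_i q_i/(1+\lambda)\cdot(\text{const})$, because $\log(1+r)\ge r/(1+\lambda)$ for $r\le\lambda$. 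Putting these together, the step reduces to a scalar inequality of the form
\[
 \lambda e^{-y}\Bigl(1+\sqrt{13C}\,S^{(2-a)/2}\,(n_v-1)^{a/2}\Bigr)^2\le C n_v^a,
 \qquad S=\sum_i q_i^{1/(2-a)}.
\]
I would split according to the size of $y$. When $y$ is large, the factor $e^{-y}$ should beat the growth of $S$. When $y$ is small, all $q_i$ are small, and the gain must come from the deficit $n_v^a-(n_v-1)^a\asymp n_v^{a-1}$ together with the $(1-q_v)$ factor.

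I expect this step to be the main obstacle. The sum $S$ is controlled by $\sum_i q_i\lesssim y$ only up to a factor $k^{(1-a)/(2-a)}$, and a vertex may have many children with tiny $q_i$. My first attempt would be to strengthen the induction hypothesis to a form that is additive across children, for instance $q_v(1-q_v)\delta_v^2\le C n_v^{a}-B$ or $q_v\delta_v^2\le C\,n_v^{a}$ together with a companion bound $|\delta_v|\le C' n_v^{a/2}/\sqrt{q_v}$ that is used only when $q_v$ is bounded below. In addition, I would separate children with $q_i\ge\eta$, of which there are at most $O(y/\eta)$, from those with $q_i<\eta$. For the latter, $q_i^2\delta_i^2=q_i\cdot q_i\delta_i^2$ is small, so their total contribution costs only $\sqrt{\eta}$. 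With a cap on the number of large children, the H\"older loss becomes a constant depending on $y$, and this is absorbed by $e^{-y}$ once $a$ is close enough to $1$.

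For $\gamma$, the recursion \eqref{eq:moment-recursion} is the same linear recursion as for $\delta$, with the constant $1$ replaced by a source term. By the already-proved first bound, this source satisfies
\[
 \sum_i q_i(1-q_i)\delta_i^2\le C\sum_i n_i^a\le C n_v .
\]
Because $a>2/3$, we have $n_v\le n_v^{2a}\cdot n_v^{1-2a}$, so the source is of lower order than the target $n_v^{2a}$. The inductive argument from the $\delta$ step can then be repeated essentially verbatim with exponent $2a$ in place of $a$, with the source term absorbed by enlarging $C$.
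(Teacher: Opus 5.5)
\textbf{There is a genuine gap.} The inductive step you set up does not close, and the fixes you sketch do not repair it.

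\emph{One child already breaks your main hypothesis.} Your main line uses the hypothesis $q_i(1-q_i)\delta_i^2\le Cn_i^a$ on the children, via $(1-q_i)^{-1}\le 13$. This hypothesis does not exclude $\delta_1=-(1+r_1)\sqrt{Cn_1^a/r_1}$, i.e.\ $q_1|\delta_1|=\sqrt{r_1Cn_1^a}$. Since $r_v=\lambda/(1+r_1)$, this gives
\[
 q_v(1-q_v)\delta_v^2\ \ge\ \frac{\lambda r_1(1+r_1)}{(1+\lambda+r_1)^2}\,Cn_1^a .
\]
The coefficient is $504/361>1$ at $\lambda=12$, $r_1=6$, while the target is $C(n_1+1)^a=(1+o(1))Cn_1^a$. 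A multiplicative loss bounded away from $1$ at bounded $y_v$ cannot be absorbed by enlarging $C$, pushing $a$ to $1$, subtracting a constant $B$, or using the factor $e^{-y}$.

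\emph{Your variant fails with many small children, and the $\eta$-split does not help.} The variant $q_v\delta_v^2\le Cn_v^a$ survives the one-child case but fails with many small children. Take $k$ children with $q_i=\epsilon$, $n_i=m$ and extremal $\delta_i=-\sqrt{Cm^a/\epsilon}$. They contribute $\sum_iq_i|\delta_i|=k\sqrt{\epsilon Cm^a}\approx\sqrt{y\,k\,Cm^a}$, where $y\approx k\epsilon$. For fixed $y$ this exceeds the target $\sqrt{Cn_v^a/q_v}$ by a factor of order $k^{(1-a)/2}$. The smallness of each $q_i$ buys nothing, because the number of small children is unbounded.

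\emph{Root cause.} Any bound of the form $q\delta^2\lesssim n^a$ allows $|\delta_i|\approx n_i^{a/2}/\sqrt{q_i}$. The truth is $|\delta_i|\lesssim n_i^{a/2}(1+y_i)^{1/p}$ with $y_i\approx\log(\lambda/q_i)$, which is only logarithmic in $1/q_i$.

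\textbf{The missing idea.} Carry the log-odds $y_v=\sum_i\log(1+r_i)$, which is additive over children, inside the induction. Multiply by $q_v(1-q_v)\le r_v\le 12e^{-y_v}$ only at the very end. The paper does this in three steps:
\begin{enumerate}
\item It unrolls $\delta_v=1-\sum_iq_i\delta_i$ into the signed path sum $\delta_v=\sum_u\pm Q(u)$, where $Q(u)=\prod_{j\in[u,v)}q_j$.
\item It proves $\sum_uQ(u)^p\le1+C_0y_v$ by induction from the scalar contraction $yq^p\le\rho\log(1+r)$ with $\rho<1$ and $p\in(3/2,2)$. This is where $\lambda\le12<e^{5/2}$ enters.
\item It applies H\"older over all $n_v$ vertices to get $|\delta_v|\le n_v^{1-1/p}(1+C_0y_v)^{1/p}$, regardless of the number of children.
\end{enumerate}

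\textbf{Separate error in the $\gamma$ paragraph.} The source $\sum_iq_i(1-q_i)\delta_i^2$ is not of lower order. Comparing $n_v$ with $n_v^{2a}$ conflates $\gamma_v$ with $\gamma_v^2$. For a star with $k$ leaves, $\gamma_v=-k\lambda/(1+\lambda)^2$ has order $n_v\gg n_v^a$. There, $q_v(1-q_v)\gamma_v^2\le Cn_v^{2a}$ holds only because $q_v\le\lambda(1+\lambda)^{-k}$. So the $\gamma$ induction must also carry $y_v$, as in the paper's bound $|\gamma_v|\le C_1n_v^a(1+Gy_v)^{1/u}$. The source is then handled by H\"older using $q_i^u(1+C_0y_i)^{2u/p}\le H\log(1+r_i)$.
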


\begin{proof}
We first bound the difference of means, then use that estimate
in the variance recurrence.
The bounds depend on the following technical lemma:

\begin{lemma}\label{lem:scalar-estimate}
There exists constants $\rho\in (0, 1)$ and $p\in (3/2, 2)$ so that for any \(\lambda\in K\), \(y\ge0\), \(r=\lambda e^{-y}\) and \(q=r/(1+r)\), we have
\begin{equation}\label{eq:scalar-p}
 \frac{yq^p}{\log(1+r)}\le
 \rho.
\end{equation}
\end{lemma}

\begin{proof}
Choose \(b\in(0,1)\) close to \(1\) that
\(12e^{-1-3b/2}<b\). (This is possible because
\(12<e^{5/2}\).) 
We first show that at exponent $2$ there is
\begin{equation}
 \frac{yq^2}{\log(1+r)}\le\frac{yr}{1+3r/2} \le b.
\end{equation}
The first inequality holds since \(\log(1+r)=-\log(1-q)\ge q+q^2/2\) and \(\frac{q^2}{q+q^2/2} = \frac{r}{1+3r/2}\).
The second inequality is equivalent to
\[
 \lambda e^{-y}(y-3b/2)\le b.
\]
Since $e^{-1+y-3b/2}\geq y-3b/2$, the maximum of the left-hand side above is $\lambda e^{-1-3b/2}$, which is at most $\frac{b\lambda}{12}\leq b$ as $\lambda\leq 12$.

At exponent \(3/2\), the simpler estimate \(\log(1+r)\ge q\) gives
\[
 \frac{yq^{3/2}}{\log(1+r)}\le y\sqrt q \le y\sqrt r
 \le\sqrt{12}\,y e^{-y/2}\le \frac{2\sqrt{12}}e,
\]
where the last step follows from the fact that the maximum of $ye^{-y/2}$ occurs at $y=2$.

Interpolating these inequalities, we can choose \(p\in(3/2,2)\) 
close enough to \(2\) so that \eqref{eq:scalar-p} holds with $\rho = \left(\frac{2\sqrt{12}}e\right)^{4-2p}b^{2p-3}$ < 1.
\end{proof}

\vspace{-15pt}

\paragraph{The difference of means.}
By mild abuse of notation, for a vertex \(u\) below \(v\), 
let $[u, v)$ be the vertices along the path from \(v\) to \(u\), 
excluding \(v\) and including \(u\). Define $Q(u) = \prod_{j\in [u, v)} q_j$, where \(Q(v) = 1\). 

Let \(C_0=(1-\rho)^{-1}\) where $\rho$ is from Lemma \ref{lem:scalar-estimate}. We claim that \[
S(v):=\sum_{u\in V(T_v)} Q(u)^p \leq 1+C_0y_v.
\]
Indeed, since $S(v) = 1 + \sum_{i} q_i^p S(i)$ 
where the sum is over children \(i\) of $v$, 
we may use induction and \eqref{eq:scalar-p} to get
\[
 S(v) \leq 1+\sum_i q_i^p(1+C_0y_i)
 \le 1+(1+C_0\rho)\sum_i\log(1+r_i)
 =1+C_0y_v.
\]
Unrolling the first recurrence in \eqref{eq:moment-recursion} gives
\[
\delta_v = \sum_{u\in V(T_v)} (-1)^{|[u, v)|} Q(u).
\]
H\"older's inequality therefore gives
\begin{equation}\label{eq:delta-bound}
 |\delta_v|\le n_v^{\,1-1/p}(1+C_0y_v)^{1/p}.
\end{equation}

\vspace{-15pt}

\paragraph{The difference of variances.}
We next bound \(\gamma_v\). 
Set \(a=2-2/p\in(2/3,1)\) and \(u=1/(1-a)=p/(2-p)>p\). 
We also fix \(q_*=12/13\).
Since $y\geq 0$ and $\lambda\leq 12$, we have $0<r=\lambda e^{-y}\leq 12$ and hence $q=\frac{r}{1+r}\leq \frac{12}{13}=q_*$.
The inequalities $\log(1+r) \geq q$ and \eqref{eq:scalar-p} imply
\[
 \frac{q^u}{\log(1+r)}\le q^{u-1} \le q_*^{u-1},\qquad
 \frac{yq^u}{\log(1+r)}\leq q^{u-p}\rho \le q_*^{u-p}\rho<1.
\]
Choose \(G>0\) sufficiently large that
\[
 q_*^{u-1}+Gq_*^{u-p}\rho<G.
\]
and let $L$ denote the left-hand side above. Thus \(q^u(1+Gy)\le L \log(1+r)\). We can also choose constant $H > 0$ such that 
\[
\frac{q^u(1+C_0y)^{2u/p}}{\log(1+r)} 
\leq r^{u-1} (1+C_0y)^{2u/p} 
\leq 12^{u-1}e^{-(u-1)y}(1+C_0y)^{2u/p} \leq H
\]
for all $y\geq 0$ and $\lambda\in K$.

Choose \(C_1\) so that
\(C_1(G^{1/u}-L^{1/u})\ge H^{1/u}\).
We now show by induction that
\begin{equation}\label{eq:gamma-bound}
 |\gamma_v|\le C_1n_v^a(1+Gy_v)^{1/u}.
\end{equation}
The leaf case is immediate since $\gamma_v=0$. For the induction step, use
H\"older's inequality with conjugate exponents \(u\) and \(1/a\).
By induction hypothesis, the magnitude of the first sum in the recurrence for \(\gamma_v\) is bounded by
\begin{align*}
 \sum_{i} q_i|\gamma_i| &\leq C_1 \sum_i q_i n_i^a (1+Gy_i)^{1/u}  
 \leq C_1\left(\sum_iq_i^u(1+Gy_i)\right)^{1/u}
       \left(\sum_i n_i\right)^a \\
 &\leq C_1 \left(L\sum_i \log(1+r_i)\right)^{1/u}\left(\sum_i n_i\right)^a
 = C_1 L^{1/u} y_v^{1/u}\left(\sum_i n_i\right)^a.
\end{align*}
Using \eqref{eq:delta-bound}, $\delta_i^2 \leq n_i^{2-2/p} (1+C_0y_i)^{2/p}=n_i^a (1+C_0y_i)^{2/p}$. Since $1-q_i\leq 1$, it follows that the second sum $\sum_i q_i(1-q_i) \delta_i^2$ in the recurrence is at most
\[
 \sum_i q_i n_i^a(1+C_0y_i)^{2/p}
 \le H^{1/u}y_v^{1/u}\left(\sum_i n_i\right)^a.
\]
Their sum is bounded by the right side of
\eqref{eq:gamma-bound}, by the choice of \(C_1\) and
\(\sum_i n_i\le n_v\). This proves the claim.

Finally, note that \(q_v(1-q_v)\le r_v\le12e^{-y_v}\).
Multiplying the squares of \eqref{eq:delta-bound} and
\eqref{eq:gamma-bound} by this factor absorbs all the polynomial
factors in \(y_v\) into uniform constants, proving
\eqref{eq:root-moments}.
\end{proof}

\section{A uniform central limit theorem}\label{sec:clt}

Let \(\Phi\) denote the standard normal distribution function.

\begin{proposition}\label{prop:clt}
There are constants \(0<c<C<\infty\) such that
\begin{equation}\label{eq:linear-variance}
 cn\le\sigma^2\le Cn
\end{equation}
for every nonempty forest with \(n\) vertices and every \(\lambda\in K\).
Moreover, as $n$ approachs infinity,
\begin{equation}\label{eq:clt}
 \sup_{\substack{\emph{$n$-vertex forest }F\\ \lambda\in K}}
 \ \sup_{x\in\mathbb R}
 \left|\Prob_\lambda\!\left(\frac{X-\mu}{\sigma}\le x\right)
                   -\Phi(x)\right|\longrightarrow0.
\end{equation}
\end{proposition}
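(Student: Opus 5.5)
We prove the variance bounds first and then derive the central limit theorem from the centroid-reveal scheme outlined in Section~\ref{sec:sketch}.

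\textbf{Variance bounds.} The upper bound $\sigma^2\le Cn$ comes from the tree structure. The variance of $X$ is additive over components, and for a rooted tree we can decompose $X$ through the martingale obtained by revealing vertices one at a time in breadth-first order. A simpler route uses \eqref{eq:derivatives}, $\sigma^2=D\mu$, together with the covariance decay $|\mathrm{Cov}(\ind_{u\in I},\ind_{w\in I})|\le (\lambda/(1+\lambda))^{\mathrm{dist}(u,w)}$, which follows from the recursion \eqref{eq:root-recursion}. This bound is too weak by itself when degrees are large, so we instead use the root-conditioning identity
\[
\Var X=q_v(1-q_v)\delta_v^2+\E\Var(X\mid \ind_{v\in I}),
\]
and bound the conditional variances inductively over the components obtained after deleting $v$ or $N[v]$.

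The lower bound $\sigma^2\ge cn$ comes from a matching-type or leaf argument. Fix a maximal set of vertex-disjoint edges and pendant structures, or more simply apply the bipartition trick: conditional on the occupation of one side $A$, the vertices of $B$ with no occupied neighbor are independent Bernoulli$(\lambda/(1+\lambda))$ variables. By the law of total variance, $\sigma^2\ge \E[\#\text{available } B\text{-vertices}]\cdot\lambda/(1+\lambda)^2$. By \eqref{eq:absence}, a vertex of degree $d$ is available with probability at least $(1+\lambda)^{-d}$. Since a forest has at least $n/2$ vertices of degree at most $3$ (because the average degree is below $2$), choosing the side $B$ containing at least half of these low-degree vertices gives $\sigma^2\ge cn$.

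\textbf{CLT.} We run the centroid decomposition. In each component of size $m>\lceil n^{1/4}\rceil$ we root at a centroid $v$, reveal $\ind_{v\in I}$, and delete $v$ or $N[v]$. Every child component then has size at most $m/2$, so along any chain the sizes halve. Let $\mathcal F$ be the $\sigma$-algebra of all reveals. Conditionally on $\mathcal F$, $X$ equals a known integer plus a sum of independent component sizes, each bounded by $n^{1/4}$.

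The law of total variance and the martingale property of revealing give
\[
\E(\E[X\mid\mathcal F]-\mu)^2=\sum_{\text{reveals}}\E\, q(1-q)\delta^2\le C\sum m^a,
\]
by Proposition~\ref{prop:root-moments}, applied to the tree containing the revealed vertex, with its own $q,\delta$ conditioned on the past. Components at depth $j$ are disjoint and have size at most $n2^{-j}$. Since $a<1$, $\sum_{\text{comps at depth }j} m^a\le n\,(n2^{-j})^{a-1}$, and summing over the depths where $m\ge n^{1/4}$ gives $O(n\cdot n^{(a-1)/4}\log n)=o(n)$. Similarly, the conditional variance $\Var(X\mid\mathcal F)$ differs from $\sigma^2$ in $L^1$ by at most $\sum \E|$martingale increments of the conditional variance$|$. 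Each increment is controlled by $q(1-q)|\gamma|$ and $q(1-q)\delta^2$ terms. By Cauchy--Schwarz with Proposition~\ref{prop:root-moments} this is $\sqrt{q(1-q)}\cdot\sqrt{C}m^a$ per reveal, and we need the total to be $o(n)$. This is the main obstacle: a plain per-reveal bound gives $\sum m^a$ only if we use the orthogonality of martingale increments in $L^2$. So we would instead bound the $L^2$ norm of the accumulated variance change, $\big(\sum \E q(1-q)\gamma^2\big)^{1/2}\le (\sum m^{2a})^{1/2}$. Since $2a<2$ and the depth sizes decay geometrically, this yields $O(n^{a})=o(n)$, with extra care for the $\delta^2$ drift terms, which are already $o(n)$ in sum as above.

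Given these estimates, on an event of probability $1-o(1)$ we have $\Var(X\mid\mathcal F)=\sigma^2(1+o(1))$, which is of order $n$ by the lower bound, and $\E[X\mid\mathcal F]-\mu=o(\sqrt n)$. Berry--Esseen with summands bounded by $n^{1/4}$ gives a conditional Kolmogorov error of $O(n^{1/4}\sigma^2/\sigma^3)=O(n^{-1/4})$. Comparing $N(\E[X\mid\mathcal F],\Var(X\mid\mathcal F))$ with $N(\mu,\sigma^2)$ and averaging over $\mathcal F$ gives \eqref{eq:clt}. All constants are uniform in $\lambda\in K$ and in the forest, which gives the supremum statement.
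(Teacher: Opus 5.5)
Your lower bound (bipartition plus total variance, with a Markov count of vertices of degree at most $3$ instead of the paper's Jensen step) and your CLT argument are correct and follow the paper's proof. The CLT part uses centroid reveals down to size $\lceil n^{1/4}\rceil$, orthogonality of the increments $\delta_j(\xi_j-q_j)$ and $\gamma_j(\xi_j-q_j)$, conditional Berry--Esseen, and normal comparison. The genuine gap is the upper bound $\sigma^2\le Cn$. It is part of the statement, and it is used again in the characteristic-function step, but you never actually prove it: ``bound the conditional variances inductively'' supplies no mechanism. Unrolling the identity you cite gives $\sigma^2=\E S+\sum_j\E\,q_j(1-q_j)\delta_j^2\le \E S+C\sum_j n_j^a$ for whatever reveal sequence you use. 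So everything hinges on $\sum_j n_j^a=O(n)$ along every history, and neither of your tools delivers this:
\begin{itemize}
\item With breadth-first (endpoint-first) reveals, the bound from Proposition~\ref{prop:root-moments} is useless. On a path revealed from one end, $\sum_j n_j^a\asymp n^{1+a}$.
\item With centroid reveals run down to single vertices, your depth-by-depth estimate gives only about $n$ per depth. Processed components at one depth are disjoint with size at least $2$, so $m^{a-1}\le 1$. Over $\log_2 n$ depths this yields only $\sigma^2=O(n\log n)$.
\end{itemize}

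The missing idea is the per-vertex charging used in the paper. Give each vertex of the $j$-th processed component the charge $n_j^{a-1}$, so the total charge is $\sum_j n_j^a$. For a fixed vertex, the processed components containing it form a chain whose sizes at least double as you go upward from the last one, and that last one has size $>b$. Hence the vertex's charge is at most $\sum_{h\ge0}(2^hb)^{a-1}=b^{a-1}/(1-2^{a-1})$, and $\sum_j n_j^a\le nb^{a-1}/(1-2^{a-1})$. Taking $b=1$, the terminal components are single vertices, so $S\le n/4$ and $\sigma^2=\E S+\Var M\le n/4+O(n)$. With $b=\lceil n^{1/4}\rceil$, the same bound also removes the harmless $\log n$ loss from your CLT estimate.

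Separately, your displayed inequality $\sum_{\text{depth } j}m^a\le n(n2^{-j})^{a-1}$ is false. Since $a<1$ and $m\le n2^{-j}$, one has $m^{a-1}\ge (n2^{-j})^{a-1}$, so the inequality points the wrong way. The valid bound is the one you actually use next: $m^{a-1}\le b^{a-1}$, because processed components have size $>b$. Likewise, the geometric decay of $\sum m^{2a}$ over depths requires $2a-1>0$, which holds because $a>2/3$; it does not follow from $2a<2$.
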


\begin{proof}
We prove the three claims in the statement in order.

\vspace{-15pt}

\paragraph{Lower variance bound.}
Fix a bipartition with vertex classes \(L,R\). Conditional on \(I\cap R\), the available vertices of \(L\) are independently
occupied with probability \(p=\lambda/(1+\lambda)\). Hence
\[
\operatorname{Var}(X\mid I\cap R)=p(1-p)\cdot \#\{v\in L:N(v)\cap I=\varnothing\}.
\] 
By total variance,
\[
\sigma^2
\ge
p(1-p)\sum_{v\in L}\mathbb{P}_\lambda(N(v)\cap I=\varnothing) \stackrel{(*)}{\geq} p(1-p)(1+\lambda)^{-d(v)},
\]
where $(*)$ follows from \((3.3)\).
The same inequality holds with \(L\) and \(R\) interchanged by symmetry,
so we can average the two inequalities. 
Since a forest has average degree at most \(2\),
Jensen's inequality gives
\begin{equation}\label{eq:variance-lower}
 \sigma^2\ge
 \frac{\lambda}{2(1+\lambda)^2}
       \sum_{v\in V(F)}(1+\lambda)^{-d(v)}
 \ge\frac{\lambda n}{2(1+\lambda)^4}\ge cn
\end{equation}
for some constant $c > 0$.

\vspace{-15pt}

\paragraph{Upper variance bound.}
For a tree, 
a \emph{centroid} is a vertex whose
deletion leaves components of at most half the original number of vertices. 
It is easy to verify that every tree has at least one centroid.

Fix an integer \(b\ge1\). Starting with a forest $F$, 
whenever a remaining component has more
than \(b\) vertices, choose a centroid of that component and
reveal whether it is occupied.
If the centroid is absent, we delete this vertex;
if the centroid is occupied, we record
its contribution \(1\) and delete its closed neighborhood.
Repeat until all components have size at most \(b\).
Every child component in this decomposition has at most half the number of vertices of its parent. 

Given any valid reveal history,
the unexposed set has the hard-core law on the
remaining forest, with the original fugacity. This follows
inductively from root conditioning. 
With a fixed tie-breaking rule, all choices of 
components and centroids are deterministic and
introduce no further conditions on that law.

Let \(\mathcal F_j\) be the information after \(j\) reveals, and let
\(M_j=\E(X\mid\mathcal F_j)\) and
\(S_j=\Var(X\mid\mathcal F_j)\). 
Write \(M,S\) for their respective terminal values. 
At a reveal in a component with \(n_j\) vertices, let
\(\xi_j\) be the centroid-occupation indicator, and let \(q_j,\delta_j,\gamma_j\)
be the root quantities in that component 
(from the previous section). The quantities
\(n_j,q_j,\delta_j,\gamma_j\) are \(\mathcal F_j\)-measurable,
and \(\E(\xi_j\mid\mathcal F_j)=q_j\).
The definition of $\delta_v$ and $\gamma_v$ together with two recurrences
in \eqref{eq:moment-recursion} give
\begin{equation}\label{eq:reveal-updates}
 \begin{split}
 M_{j+1}-M_j&=\delta_j(\xi_j-q_j),\\
 S_{j+1}-S_j&=\gamma_j(\xi_j-q_j)
                       -q_j(1-q_j)\delta_j^2.
 \end{split}
\end{equation}
To see the second identity, the pre-reveal variance of the affected
component is the average of its two conditional variances plus
\(q_j(1-q_j)\delta_j^2\). Its selected conditional variance differs
from their average by \(\gamma_j(\xi_j-q_j)\).
Other components remain independent and unchanged.

There are at most \(n\) reveals, so all martingale sums below have deterministic finite
length. The centered terms $M_{j+1}$ and $S_{j+1}$ in \eqref{eq:reveal-updates} have
second moments \(q_j(1-q_j)\delta_j^2\) and
\(q_j(1-q_j)\gamma_j^2\), conditioned on $M_j$ and $S_j$ respectively.

We now use the halving property to show that along every possible history, there is
\begin{equation}\label{eq:centroid-sums}
 \sum_j n_j^a\le\frac{n b^{a-1}}{1-2^{a-1}},
 \qquad
 \sum_j n_j^{2a}\le\frac{n^{2a}}{1-2^{-(2a-1)}},
\end{equation}
where $a\in(2/3,1)$ is from Proposition~\ref{prop:root-moments}.
Indeed, for the first bound, assign a charge \(n_j^{a-1}\) to each vertex
of the processed component. For a fixed vertex, the component sizes containing it halve until its last processed component,
whose size exceeds \(b\). Hence \[
\sum n_j^{a-1}
\le b^{a-1}\sum_{h\ge0}2^{h(a-1)}
=\frac{b^{a-1}}{1-2^{a-1}}.
\]
Summing over all vertices proves the bound. For the second, at depth
\(h\) the processed components are disjoint, their total sizes are
at most \(n\), and each has size at most \(n2^{-h}\).
Thus
\[
	\sum_{j:\,\mathrm{depth}(j)=h} n_j^{2a}
	\le (n2^{-h})^{2a-1}\sum_{j:\,\mathrm{depth}(j)=h}n_j
	\le n^{2a}2^{-(2a-1)h}.
\]
Summing over depths proves the claim,
regardless of whether the initial forest is connected or not.

We claim that
\begin{equation}\label{eq:terminal-stability}
\begin{split}
 \E(M-\mu)^2&= O(nb^{a-1}),\\
 \E|S-\sigma^2|& =O\bigl(n^a+nb^{a-1}\bigr).
 \end{split}
\end{equation}
To obtain the first bound, $$\E(M-\mu)^2=\E(\sum_j (M_{j+1}-M_j))^2=\sum_j \E(q_j(1-q_j)\delta_j^2)\stackrel{(*)}{\leq} C\sum_j n_j^a \stackrel{(**)}{=}O(nb^{a-1}),$$
where we apply Proposition \ref{prop:root-moments} for $(*)$ and (\ref{eq:centroid-sums}) for $(**)$. For the second bound, we use
\[\E\Big(\sum_j \gamma_j(\xi_j-q_j)\Big)^2 =\sum_j \E(q_j(1-q_j)\gamma_j^2) \stackrel{(\diamond)}{\leq} C \sum_j n_j^{2a}=O(n^{2a}),\]
where $(\diamond)$ follows from Proposition \ref{prop:root-moments}. Therefore Cauchy-Swartz implies that $\E|\sum_j \gamma_j(\xi_j-q_j)|=O(n^{a})$. The estimate then follows from $\E|S-\sigma^2|=\E|\sum_j \gamma_j(\xi_j-q_j)-q_j(1-q_j)\delta_j^2|$ and the first bound.

Choose \(b=1\). The terminal components are single vertices, so
\(S\le n/4\). Total variance and the first inequality in
\eqref{eq:terminal-stability} give
\(\sigma^2=\E S+\Var M\le n/4+O(n)\).
Together with \eqref{eq:variance-lower}, this shows that $\sigma^2 = \Theta(n)$.

\vspace{-15pt}

\paragraph{Convergence to normal.}
Now take \(b=\lceil n^{1/4}\rceil\) instead.
Conditional on the terminal information, \(X\) is a known integer
plus a sum of independent random variables \(X_i\), where \(X_i\)
counts the occupied vertices in a terminal component and
\(0\le X_i\le b\).
Therefore, $|X_i-\E X_i|\leq b$ and hence
\[
 \sum_i\E|X_i-\E X_i|^3\le b\sum_i \E|X_i-\E X_i|^2=b\sum_i\Var X_i=bS,
\]
where all moments in this display are under that conditional law.
On \(S\ge\sigma^2/2\), the Berry--Esseen inequality for independent but
not necessarily identically distributed variables gives error
at most \(O(bS/S^{3/2})=O(b/\sqrt S)\le O(b/\sqrt n)\) relative to the normal law
with mean \(M\) and variance \(S\).
We use only this standard one-dimensional inequality; an explicit
version follows, for example, from Rai\v{c}~\cite[Theorem~1.1,
\(d=1\)]{Raic}.

For \(r\ge1/2\) and \(h\in\mathbb R\), differentiation of the
normal distribution function in its shift and scale yields
\begin{equation}\label{eq:normal-comparison}
 \sup_x\left|\Phi\!\left(\frac{x-h}{\sqrt r}\right)-\Phi(x)\right|
 = O\bigl(|h|+|r-1|\bigr).
\end{equation}
Indeed, write $\Phi\left(\frac{x-h}{\sqrt r}\right)-\Phi(x)=\left(\Phi\left(\frac{x-h}{\sqrt r}\right)-\Phi(\frac{x}{\sqrt r})\right)+\left(\Phi(\frac{x}{\sqrt r})-\Phi(x)\right).$ Since $\Phi'(x)\leq \frac{1}{\sqrt{2\pi}}$,
\[\left|\Phi\left(\frac{x-h}{\sqrt r}\right)-\Phi(\frac{x}{\sqrt r})\right|\leq \left|\frac{h}{\sqrt r} \sup_x \Phi'(x)\right|=O(|h|).\] Next write $f(s)=\Phi(\frac{x}{\sqrt s})$, where $s\geq \frac{1}{2}$. The mean value theorem implies that
\[\left|\Phi(\frac{x}{\sqrt r})-\Phi(x)\right|=|f(r)-f(1)|\leq \sup_{s\geq 1/2} f'(s) \cdot|r-1|=O(|r-1|).\]
This proves (\ref{eq:normal-comparison}). 

Finally, apply \eqref{eq:normal-comparison} with
\(h=(M-\mu)/\sigma\), \(r=S/\sigma^2\); in particular, $\frac{X-\mu}{\sigma}=\sqrt{r}\cdot\frac{X-M}{S}+h$.
The complementary event $S<\sigma^2/2$ has probability 
$$\mathbb{P}\left(|S-\sigma^2|>\frac{\sigma^2}{2}\right)\leq \frac{2\E|S-\sigma^2|}{\sigma^2}=O(b^{a-1}+n^{a-1}),$$ by (\ref{eq:terminal-stability}) and the fact $\sigma^2=\Theta(n)$ we proved earlier. Putting everything together, the left side of
\eqref{eq:clt} is bounded by the sum of 1) the conditional Berry--Esseen estimate $O(b/\sqrt{n})$, 2) the mean shift $\E(|h|)=O(\frac{\sqrt{n}b^{a-1}}{\sqrt{n}})=O(b^{(a-1)/2})$ by \eqref{eq:terminal-stability}, and 3) the complementary event estimate $O(b^{a-1}+n^{a-1})$. That is,
\[
 O\left(\frac b{\sqrt n}
          +b^{(a-1)/2}+b^{a-1}+n^{a-1}\right).
\]
This tends to zero as $n\to\infty$ because \(a<1\), uniformly over all forests
and all fugacities in \(K\).
\end{proof}

\section{Log-concavity around mean}
\label{sec:fourier}

We now upgrade the central limit theorem from the previous section into convergence of the characteristic function, 
and show that the independence polynomial is log-concave around $\mu$ for sufficiently large forests.

The following estimate uses conditional independence, as in the
central-to-local limit method of~\cite{DT,JPSS}.

\begin{lemma}\label{lem:fourier}
There exists a constant $c > 0$ such that for every forest with \(n\) vertices, every \(\lambda\in K\), and every \(t\in \R\),
\begin{equation}\label{eq:fourier}
 \left|\E_\lambda e^{itX}\right|
 \le \exp\{-c n\sin^2(t/2)\}.
\end{equation}
\end{lemma}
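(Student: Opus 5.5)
The approach is to use the conditional independence noted in the lower variance bound of Proposition~\ref{prop:clt}, and then control the number of available vertices through a two-fugacity partition function. Fix a bipartition $L,R$ of the forest and write $p=\lambda/(1+\lambda)$. Conditional on $I\cap R$, let $A_L=\#\{v\in L: N(v)\cap I=\varnothing\}$. The occupied vertices of $L$ are then $A_L$ independent Bernoulli$(p)$ variables, and $|I\cap R|$ is fixed. The identity $|1-p+pe^{it}|^2=1-4p(1-p)\sin^2(t/2)\le \exp\{-4p(1-p)\sin^2(t/2)\}$ gives
\[
 \bigl|\E_\lambda e^{itX}\bigr|\le \E_\lambda\bigl|\E_\lambda(e^{itX}\mid I\cap R)\bigr|
 \le \E_\lambda e^{-sA_L},\qquad s=2p(1-p)\sin^2(t/2).
\]
Since $e^{-sA_L}$ is decreasing in $s$, we may replace $s$ by $s'=\min\{s,\tfrac12\log(1+\lambda)\}$. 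For $\lambda\in K$ this still satisfies $s'\ge c_1\sin^2(t/2)$, because $\sin^2\le1$ and $\log(1+\lambda)\ge\log(5/4)$. It therefore suffices to show $\E_\lambda e^{-s'A_L}\le \exp\{-c_2 s'\sum_{v\in L}(1+\lambda)^{-d(v)}\}$.

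For this, introduce the two-fugacity model with fugacity $\lambda_L$ on $L$ and $\lambda_R=\lambda$ on $R$. Summing out $L$ gives
\[
 Z(\lambda_L,\lambda_R)=\sum_{J\subseteq R \text{ indep.}}\lambda_R^{|J|}(1+\lambda_L)^{A_L(J)}.
\]
Hence, with $\lambda'$ defined by $1+\lambda'=(1+\lambda)e^{-s'}$, we get $\E_\lambda e^{-s'A_L}=Z(\lambda',\lambda)/Z(\lambda,\lambda)$. The cap on $s'$ guarantees $\lambda'>0$, and in fact $\lambda'\ge\sqrt{1+\lambda}-1$ is bounded below uniformly. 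Then
\[
 \log\frac{Z(\lambda',\lambda)}{Z(\lambda,\lambda)}=-\int_{\lambda'}^{\lambda}\frac{\E_{\theta,\lambda}|I\cap L|}{\theta}\,d\theta .
\]
The key input is the marginal-occupation inequality. At fugacity $\theta$ on $L$, a vertex $v\in L$ is occupied with conditional probability $\theta/(1+\theta)$ given that its neighbourhood is empty. The neighbours of $v$ lie in $R$ and carry fugacity $\lambda$, so \eqref{eq:absence} applies as before and gives $\Prob(N(v)\cap I=\varnothing)\ge(1+\lambda)^{-d(v)}$. Thus $\E_{\theta,\lambda}|I\cap L|\ge \frac{\theta}{1+\theta}\sum_{v\in L}(1+\lambda)^{-d(v)}$. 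Integrating, and using $\lambda-\lambda'=(1+\lambda)(1-e^{-s'})\ge c_3 s'$ and $\theta\le 12$, yields the desired bound with $\sum_{v\in L}(1+\lambda)^{-d(v)}$.

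Finally, the argument is symmetric in $L$ and $R$, so we pick the side carrying the larger half of $\sum_{v}(1+\lambda)^{-d(v)}$. By Jensen's inequality, exactly as in \eqref{eq:variance-lower}, the full sum is at least $n(1+\lambda)^{-2}\ge n/169$. Combining the bounds gives \eqref{eq:fourier} with a constant $c$ uniform over forests and $\lambda\in K$.

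The step I expect to need the most care is the integration in the second paragraph. One must verify that \eqref{eq:absence} still holds in the two-fugacity model: it does, because only the fugacities on $R$ matter for $N(v)\subseteq R$. One must also verify that capping $s$ keeps $\lambda'$ uniformly positive, so that no factor degenerates near $\theta=0$.
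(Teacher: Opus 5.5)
Your proposal is correct and follows essentially the same route as the paper. Both arguments condition on $I\cap R$ to bound the conditional modulus by an exponential in the number of available $L$-vertices. Both then rewrite its expectation as $Z(\lambda',\lambda)/Z(\lambda,\lambda)$, integrate the marginal-occupation bound $\Prob_{\theta,\lambda}(v\in I)\ge\frac{\theta}{1+\theta}(1+\lambda)^{-d(v)}$ in the $L$-fugacity, and finish with Jensen. The differences are cosmetic. The paper caps via $\beta=\min\{\lambda/(1+\lambda),2\lambda/(1+\lambda)^2\}$, allowing the lower endpoint $\tau=0$. It also integrates $1/(1+\theta)$ exactly rather than bounding it by $1/13$; in your notation this gives $\log\frac{1+\lambda}{1+\lambda'}=s'$, so your $c_2$ can be taken to be $1$.
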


\begin{proof}
First let \(G\) be any bipartite graph with classes \(L,R\),
and fix \(\lambda>0\). Given \(I\cap R\), let \(B\) be the number
of available vertices of \(L\). Then
\begin{equation}\label{eq:cond-modulus}
 \left|\E(e^{itX}\mid I\cap R)\right|
 =\left(1-\frac{4\lambda}{(1+\lambda)^2}
                          \sin^2(t/2)\right)^{B/2}.
\end{equation}
Set
\[
 \beta=\min\left\{\frac{\lambda}{1+\lambda},
                         \frac{2\lambda}{(1+\lambda)^2}\right\},
 \qquad h=1-\beta\sin^2(t/2),\qquad
 \tau=(1+\lambda)h-1.
\]
The inequality \(\sqrt{1-z}\le1-z/2\) for \(0\le z\le1\)
shows that the right-hand side of \eqref{eq:cond-modulus} is at most \(h^B\).
Our choice of \(\beta\) ensures \(0\le\tau\le\lambda\) and \(0 < h \le 1\).

Use \(Z_G(s,\lambda)\) for the partition function with fugacity
\(s\) on \(L\) and \(\lambda\) on \(R\),
which we can compute to be \[
Z_G(s,\lambda) = \sum_{J\subseteq R} \lambda^{|J|} (1+s)^B.
\]
which combined with \(h=(1+\tau)/(1+\lambda)\) gives
\begin{equation}\label{eq:inhomogeneous-ratio}
 \E_\lambda h^B = \frac{\sum_{J\subseteq R} \lambda^{|J|}(1+\lambda)^Bh^B}{Z_G(\lambda)}
 =\frac{Z_G(\tau,\lambda)}{Z_G(\lambda,\lambda)}.
\end{equation}

At these inhomogeneous fugacities, each vertex of \(R\) has
occupation probability at most \(\lambda/(1+\lambda)\), also
after conditioning some other vertices of \(R\) to be absent.
Thus, for \(v\in L\),
\[
 \Prob_{s,\lambda}(v\in I)
 =\frac{s}{1+s}\Prob_{s,\lambda}(N(v)\cap I=\varnothing)
 \ge\frac{s}{1+s}(1+\lambda)^{-d(v)}.
\]
For \(s>0\), logarithmic differentiation gives
\[
 \frac{d}{ds}\log Z_G(s,\lambda)
 =\frac1s\sum_{v\in L}\Prob_{s,\lambda}(v\in I)
 \ge\frac1{1+s}\sum_{v\in L}(1+\lambda)^{-d(v)}.
\]
Integrating the above inequality for $s$ from \(\tau\) to \(\lambda\) (using continuity if
\(\tau=0\)) gives
\[
\log\left(\frac{Z_G(\lambda, \lambda)}{Z_G(\tau, \lambda)}\right) \geq -\log(h)\sum_{v\in L}(1+\lambda)^{-d(v)},
\]
so applying \eqref{eq:inhomogeneous-ratio} gives
\[
 \left|\E_\lambda e^{itX}\right| \le \E_\lambda h^B
 \le h^{\,\sum_{v\in L}(1+\lambda)^{-d(v)}}
 \le
 \exp\left\{-\beta\sin^2(t/2)
                   \sum_{v\in L}(1+\lambda)^{-d(v)}\right\}.
\]
For a forest, we choose the bipartition $L\sqcup R$ so that \(|L|\ge n/2\).
Since \(\sum_{v\in L}d(v)=|E(F)|\le n\), Jensen's inequality yields
\[
 \sum_{v\in L}(1+\lambda)^{-d(v)}
 \ge |L|(1+\lambda)^{-|E(F)|/|L|}
 \ge\frac{n}{2(1+\lambda)^2} \ge \frac{n}{338}.
\]
So \eqref{eq:fourier} holds with $c = (\inf_{\lambda\in K}\beta)/338$.
\end{proof}

\begin{proposition}\label{prop:curvature}
Uniformly over forests with \(n\) vertices and fugacities
\(\lambda\in K\) for which \(\mu=k\) is an integer, as $n$ approaches infinity we have
\begin{equation}\label{eq:curvature-limit}
 \sigma^3\bigl(2\Prob_\lambda(X=k)
       -\Prob_\lambda(X=k-1)-\Prob_\lambda(X=k+1)\bigr)
 \longrightarrow\frac1{\sqrt{2\pi}}.
\end{equation}
Consequently, for all sufficiently large $n$ and every forest $F$ on $n$ vertices,
\begin{equation}\label{eq:mean-lc}
 i_k(F)^2>i_{k-1}(F)i_{k+1}(F)
\end{equation}
at every integer \(k\) between $\E_{1/4}X$ and $\E_{12}X$.
\end{proposition}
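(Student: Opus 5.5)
We prove \eqref{eq:curvature-limit} by Fourier inversion and then translate the positive second difference into log-concavity of the coefficients.

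\textbf{Fourier inversion.} Write $\phi(t)=\E_\lambda e^{it(X-k)}$, where $k=\mu$. For integer-valued $X$,
\[
 2\Prob(X=k)-\Prob(X=k-1)-\Prob(X=k+1)=\frac1{2\pi}\int_{-\pi}^{\pi}\phi(t)\,(2-2\cos t)\,dt=\frac{1}{2\pi}\int_{-\pi}^{\pi}\phi(t)\,4\sin^2(t/2)\,dt .
\]
Substitute $t=s/\sigma$ and multiply by $\sigma^3$. The left side of \eqref{eq:curvature-limit} becomes
\[
 \frac1{2\pi}\int_{|s|\le\pi\sigma}\phi(s/\sigma)\,4\sigma^2\sin^2\!\bigl(s/(2\sigma)\bigr)\,ds .
\]
The target is $\frac1{2\pi}\int e^{-s^2/2}s^2\,ds=\frac{1}{\sqrt{2\pi}}$. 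Split the range into three pieces.

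\emph{Middle range} $|s|\le A$ for a fixed large $A$. The uniform CLT \eqref{eq:clt} gives weak convergence of $(X-\mu)/\sigma$ to $N(0,1)$, uniformly over forests and $\lambda\in K$. Hence $\phi(s/\sigma)\to e^{-s^2/2}$ uniformly on compacts: the centring is exactly $\mu$, and Lévy continuity is applied along arbitrary sequences. Also $4\sigma^2\sin^2(s/(2\sigma))\to s^2$ uniformly there, since $\sigma\to\infty$ by \eqref{eq:linear-variance}. So this piece converges to the Gaussian integral over $[-A,A]$.

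\emph{Tails} $A\le|s|\le\pi\sigma$. Lemma~\ref{lem:fourier} and $\sin^2(x/2)\ge x^2/\pi^2$ on $[-\pi,\pi]$ give
\[
 |\phi(s/\sigma)|\le\exp\{-cns^2/(\pi^2\sigma^2)\}\le e^{-c's^2},
\]
using $\sigma^2\le Cn$. Since the weight is at most $s^2$, the tail is bounded by $\int_{|s|>A}s^2e^{-c's^2}\,ds$, which is small uniformly once $A$ is large. The Gaussian tail is small as well. Letting $n\to\infty$ and then $A\to\infty$ proves \eqref{eq:curvature-limit}.

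\textbf{From curvature to log-concavity.} Write $P_j=\Prob_\lambda(X=j)=i_j\lambda^j/Z$. The inequality $P_k^2>P_{k-1}P_{k+1}$ is equivalent to $i_k^2>i_{k-1}i_{k+1}$, since the $\lambda$-powers cancel. By \eqref{eq:curvature-limit}, $2P_k>P_{k-1}+P_{k+1}$ for large $n$. AM–GM gives $P_{k-1}+P_{k+1}\ge2\sqrt{P_{k-1}P_{k+1}}$, so $P_k>\sqrt{P_{k-1}P_{k+1}}$, which is the strict inequality we want.

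\textbf{Reaching every integer $k$.} Since $D\mu=\sigma^2>0$, the mean $\mu(\lambda)$ is continuous and strictly increasing on $K$. So every integer $k\in[\E_{1/4}X,\E_{12}X]$ equals $\mu(\lambda)$ for some $\lambda\in K$, and the uniform statement applies to that $\lambda$. The threshold on $n$ comes from the uniformity, so it is independent of $F$, $k$ and $\lambda$.

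\textbf{Main obstacle.} The delicate point is uniformity. The CLT gives only distributional convergence, uniform over a family, and we need it turned into uniform convergence of characteristic functions on compacts. I would argue by contradiction along sequences $(F_n,\lambda_n)$: uniform Kolmogorov convergence to $N(0,1)$ yields pointwise convergence of characteristic functions. Uniformity on $[-A,A]$ then follows from equicontinuity, since $|\phi'|\le\E|X-\mu|/\sigma\le1$. The other ingredient is that the exact centring $\mu=k$ makes the limiting phase trivial, so there is no residual shift term.
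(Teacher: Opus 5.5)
Your proof is correct and follows the paper's argument: lattice Fourier inversion at the exact centring $\mu=k$, the bound $|\phi(s/\sigma)|\le e^{-c's^2}$ on $|s|\le\pi\sigma$ from Lemma~\ref{lem:fourier} together with $\sigma^2\le Cn$, then AM--GM and the continuity and strict monotonicity of $\mu(\lambda)$. Your explicit middle/tail split and the equicontinuity bound $|\tfrac{d}{ds}\phi(s/\sigma)|\le 1$ fill in the uniform dominated-convergence and uniform-on-compacts steps that the paper only asserts.
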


\begin{proof}
Let \(\chi(t)=\E e^{it(X-\mu)/\sigma}\).
Proposition~\ref{prop:clt} implies that \(\chi(t)\) tends to
\(e^{-t^2/2}\), uniformly on each fixed compact interval of \(t\)
and uniformly over the forests and fugacities.

For \(|t|\le\pi\sigma\), Lemma~\ref{lem:fourier} and
\(\sin z\ge2z/\pi\) on \(0\le z\le\pi/2\) give
\begin{equation}\label{eq:standardized-fourier}
 |\chi(t)|\le
 \exp\left\{-\frac{cn}{\pi^2\sigma^2}t^2\right\}
 \le e^{-c't^2},
\end{equation}
for some constants $c, c' > 0$, using the upper bound in \eqref{eq:linear-variance}.

Let \(p_j=\Prob_\lambda(X=j)\). If \(\mu=k\in\mathbb Z\), lattice Fourier inversion gives
\begin{align*}
 \sigma^3(2p_k-p_{k-1}-p_{k+1})
 &=\frac{\sigma^3}{2\pi}\int_{-\pi\sigma}^{\pi\sigma}
      (2-e^{it/\sigma}-e^{-it/\sigma})\chi(t)\,d(t/\sigma) \\
 &=\frac1{2\pi}\int_{-\pi\sigma}^{\pi\sigma}
      2\sigma^2\bigl(1-\cos(t/\sigma)\bigr)\chi(t)\,dt.
\end{align*}
The multiplier on $\chi(t)$ is at most \(t^2\) in absolute value and tends to
\(t^2\) uniformly on compact intervals, since
\(\sigma^2\ge cn\) uniformly approaches infinity. Equation~\eqref{eq:standardized-fourier}
provides the integrable dominating function \(t^2e^{-c't^2}\).
Thus the right side tends uniformly to
\[
 \frac1{2\pi}\int_{\mathbb R}t^2e^{-t^2/2}\,dt
 =\frac1{\sqrt{2\pi}},
\]
which proves \eqref{eq:curvature-limit}.

For sufficiently large \(n\), it follows that
\(2p_k>p_{k-1}+p_{k+1}\). The arithmetic--geometric mean
inequality implies \(p_k^2>p_{k-1}p_{k+1}\).
From \eqref{eq:measure},
\[
 p_k^2-p_{k-1}p_{k+1}
 =\frac{\lambda^{2k}}{Z_F(\lambda)^2}
       \bigl(i_k(F)^2-i_{k-1}(F)i_{k+1}(F)\bigr),
\]
so $i_k^2 > i_{k-1}i_{k+1}$ as well.

Finally, \eqref{eq:derivatives} shows that the mean is continuous
and strictly increasing in \(\lambda\). Every integer between
the endpoint means is therefore the mean for some \(\lambda\in K\).
These integers are interior support indices, because every
finite positive fugacity has mean strictly between \(0\) and
\(\alpha(F)\). This proves \eqref{eq:mean-lc}.
\end{proof}

\section{The range of the mean}\label{sec:mean}

The previous section shows that the independence polynomial is 
log-concave at all indices that can be the mean for some fugacity $\lambda \in K$. 
It now remains to bound this range of mean in order to prove Theorem~\ref{thm:central}.

\begin{proposition}\label{prop:mean}
For every nonempty forest \(F\) with \(n\) vertices,
\begin{equation}\label{eq:mean-range}
 \E_{1/4}X\le\frac n5,\qquad
 \E_{12}X>\frac{17\alpha(F)}{25}.
\end{equation}
\end{proposition}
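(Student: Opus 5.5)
The two inequalities in \eqref{eq:mean-range} are of quite different difficulty. I would handle the lower-fugacity bound directly and spend most of the effort on the upper one. Throughout, write $s=\log\lambda$ and $f(s)=\log Z_F(e^s)$. Then \eqref{eq:derivatives} says $f'(s)=\mu$ and $f''(s)=\sigma^2$, so $f$ is convex in $s$.

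For $\E_{1/4}X\le n/5$ nothing new is needed. Section~\ref{sec:hardcore} shows that at fugacity $\lambda$ every vertex is occupied with probability at most $\lambda/(1+\lambda)$, which equals $1/5$ at $\lambda=1/4$. Summing over the $n$ vertices gives $\E_{1/4}X\le n/5$.

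For $\E_{12}X>17\alpha(F)/25$, I would compare $f$ at $s_1=\log 12$ with $f$ at a smaller point $s_0=\log\lambda_0$, using three ingredients.

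\emph{Lower bound on $Z_F(12)$.} Every subset of a maximum independent set is independent, so $Z_F(\lambda)\ge(1+\lambda)^{\alpha}$. Hence $f(s_1)\ge\alpha\log 13$.

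\emph{Upper bound on $Z_F(\lambda_0)$.} By K\"onig's theorem a forest has a matching of size $m=n-\alpha\le\alpha$. An independent set contains at most one endpoint of each matching edge, so
\[
Z_F(\lambda_0)\le(1+2\lambda_0)^{m}(1+\lambda_0)^{n-2m}.
\]
Since $n-2m=2\alpha-n$ and $\log(1+2\lambda_0)\le2\log(1+\lambda_0)$, the right side is largest when $m=\alpha$, which gives $f(s_0)\le\alpha\log(1+2\lambda_0)$.

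\emph{Convexity with a curvature correction.} Taylor's formula with integral remainder reads
\[
f(s_0)=f(s_1)-\mu(s_1)(s_1-s_0)+\int_{s_0}^{s_1}(s-s_0)\sigma^2(s)\,ds .
\]
Rearranging,
\[
\mu(s_1)\ \ge\ \frac{f(s_1)-f(s_0)}{s_1-s_0}+\frac{1}{s_1-s_0}\int_{s_0}^{s_1}(s-s_0)\sigma^2(s)\,ds.
\]
For the variance I would use the explicit lower bound \eqref{eq:variance-lower}, $\sigma^2\ge\lambda n/(2(1+\lambda)^4)$. Since $n\ge\alpha$, the correction term is a positive multiple of $\alpha$. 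If it is too small I would instead use the sharper intermediate form $\frac{\lambda}{2(1+\lambda)^2}\sum_v(1+\lambda)^{-d(v)}$.

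Combining the three pieces gives a bound of the shape $\E_{12}X\ge\alpha\,\Psi(\lambda_0)$. The final step is to choose $\lambda_0\in(0,12)$, numerically, so that $\Psi(\lambda_0)>17/25$.

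The main obstacle is the constants. The first two ingredients alone give only about $(\log13-\log(1+2\lambda_0))/(\log12-\log\lambda_0)$, roughly $0.6$ at best, which is short of $0.68$. So the variance term has to supply a genuine fraction of $\alpha$ over the integration range, and the crude constant in \eqref{eq:variance-lower} may not suffice.

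If it does not, I would first try to sharpen the estimate of $f(s_0)$. The idea is to bound $Z_F(\lambda_0)$ by the product over a tree decomposition, or by the recursion \eqref{eq:root-recursion}, instead of the matching bound, so as to exploit that unmatched and leaf-heavy structure forces $\alpha$ to be large relative to $n$.

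A second option is to integrate $\mu'=\sigma^2$ only near $s_1$. There the variance bound is stronger in terms of $n$, and I would combine it with $n\le2\alpha$, which holds for forests.

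Failing both, I would split by the ratio $\alpha/n$. When $\alpha$ is close to $n$, the per-vertex lower bound on $\Prob_{12}(v\in I)$ from \eqref{eq:absence} times $\lambda/(1+\lambda)$ already pushes the mean high. When $\alpha$ is close to $n/2$, the matching bound on $f(s_0)$ is tight but $\alpha$ is small relative to $n$, so the variance correction becomes relatively large.
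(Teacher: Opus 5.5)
Your bound $\E_{1/4}X\le n/5$ is correct and is exactly the paper's argument. The second inequality has a genuine gap. The closing step, choosing $\lambda_0$ so that $\Psi(\lambda_0)>17/25$, cannot be carried out with the ingredients you list, and none of your fallbacks repairs it.

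\textbf{The main line caps below $0.68$.} Take a forest with $n=2\alpha$, for example a long even path. Your three inputs are $f(s_1)\ge\alpha\log13$, $f(s_0)\le\alpha\log(1+2\lambda_0)$, and $\sigma^2\ge\lambda n/(2(1+\lambda)^4)$. The curvature correction $\frac{1}{s_1-s_0}\int_{s_0}^{s_1}(s-s_0)\sigma^2\,ds$ contributes only about $0.02\alpha$ at the optimal $\lambda_0\approx 1/2$. Maximizing over $\lambda_0$ gives only about $0.61\alpha$, and about $0.60\alpha$ with your further reduction $n\ge\alpha$ in the variance term.

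\textbf{The fallbacks do not close the gap.}
\begin{itemize}
\item The degree-refined variance form does not help for even paths, since there $\sum_v(1+\lambda)^{-d(v)}=n(1+\lambda)^{-2}+O(1)$.
\item Sharpening $f(s_0)$ cannot suffice on its own. On the perfect matching $\frac n2K_2$ the matching bound is an equality and the refined variance bound is $\lambda n/(2(1+\lambda)^3)$, yet optimizing over $\lambda_0$ still yields only about $0.66\alpha$.
\item Integrating near $s_1$ is worse, not better. The function $\lambda/(1+\lambda)^4$ peaks at $\lambda=1/3$, so the variance bound near $\lambda=12$ is about $2\cdot10^{-4}n$.
\item Splitting by $\alpha/n$ also fails. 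The bound \eqref{eq:absence} gives only $\Prob_{12}(v\in I)\ge\frac{12}{13}13^{-d(v)}$, which for a star $K_{1,n-1}$ (where $\alpha=n-1$) yields about $0.07\alpha$.
\end{itemize}
The structural problem is that you compare $f(s_1)$, $f(s_0)$ and $\sigma^2$ with $\alpha$ or $n$ separately. Each estimate is tight on a different forest: $Z_F(12)\approx13^{\alpha}$ for stars, and the matching bound for $\frac n2K_2$. So the losses stack, while the true $\E_{12}X$ for these examples is far above $0.68\alpha$.

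\textbf{The missing idea is to measure everything relative to $\mu$.} The paper works with $Q(\lambda)=\log Z_F(\lambda)/\E_\lambda X$.
\begin{itemize}
\item At $\lambda=1$, the tree factorization $\log Z_F(1)=\sum_v\log(1+r_v)$, together with $\Prob_1(v\in I)\ge r_v/(2+r_v)$ and monotonicity of $(2+r)\log(1+r)/r$ on $(0,1]$, gives $Q(1)\le3\log2$.
\item Conditioning on one side of the bipartition shows the conditional variance is $\frac1{1+\lambda}$ times the conditional mean. This gives $\sigma^2\ge\mu/(2(1+\lambda))$, which is much stronger than any $n$-based bound once $\mu$ is large.
\item Inserting this into $DQ=1-(\sigma^2/\mu)Q$ and integrating from $1$ to $12$ gives $Q(12)\le Q^*(12)\approx3.767$.
\item Only then is $\log Z_F(12)\ge\alpha\log13$ used, once, giving $\E_{12}X\ge\alpha\log13/Q^*(12)\approx0.6809\alpha$.
\end{itemize}
Even this chain clears $17/25$ by less than $0.001$. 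A route that bounds $\log Z_F$ against $\alpha$ and $n$ separately would need substantially better variance bounds than any available in the paper.
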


\begin{proof}
At $\lambda = 1/4$, each vertex is occupied with probability at most 
\(\lambda/(1+\lambda)  = 1/5\), which gives the first bound.

For the second bound, fix \(F\) and let
\(Q(\lambda)=\log Z_F(\lambda)/\E_\lambda X\).
We first bound \(Q(1)\), then use a variance inequality to control
its growth with \(\lambda\).

\vspace{-15pt}

\paragraph{Partition function and mean at fugacity \(1\).}
Root each component, and use the descendant odds \(r_v\) at
fugacity \(\lambda = 1\). We factor
\[
 Z_{T_v}(1)=(1+r_v)\prod_i Z_{T_i}(1),
\]
where \(T_i\) are the child trees. Iterating it yields
\[
 \log Z_F(1)=\sum_v\log(1+r_v),
\]
where we note that $0<r_v\le1$ for all $v$.
If \(v\) is not the root of a component, let \(s_v\) be the occupation
odds of its parent, rooted in the component on the parent's side
after deleting the connecting edge between $v$ and the parent; 
again \(0<s_v\le1\).
Divide the weights by the product of the two root-absent partition
functions. The three allowed occupation states of \(v\) and its parent,
\((0,0),(1,0),(0,1)\), then have weights \(1,r_v,s_v\) respectively.
Thus the occupation probability of \(v\) in the entire forest is
\[
 \Prob_1(v\in I)=\frac{r_v}{1+r_v+s_v}
 \ge\frac{r_v}{2+r_v}.
\]
For a component root the same inequality holds with \(s_v=0\).
The function
\[
 r\longmapsto\frac{(2+r)\log(1+r)}r
\]
is increasing for \(0<r\le1\), and its value at \(r=1\) is \(3\log2\).
Consequently,
\[
 \log(1+r_v)\le3\log2\cdot\frac{r_v}{2+r_v}
             \le3\log2\cdot\Prob_1(v\in I).
\]
Summation gives
\begin{equation}\label{eq:logz-one}
 \log Z_F(1)\le3\log2\cdot\E_1X \, \Rightarrow\, Q(1)\leq 3\log2.
\end{equation}

\vspace{-15pt}

\paragraph{Bounding the growth of $Q$.}
For any bipartite graph and any \(\lambda>0\), the same
conditional-variance argument used for \eqref{eq:variance-lower} gives
\begin{equation}\label{eq:variance-mean}
 \sigma^2\ge\frac{\mu}{2(1+\lambda)}.
\end{equation}
In detail, conditioning on \(I\cap R\) gives conditional variance
\(\lambda/(1+\lambda)^2\) times the number of available vertices
in \(L\), whose expected occupation count is
\(\lambda/(1+\lambda)\) times that number. Hence
\(\sigma^2\ge\E|I\cap L|/(1+\lambda)\).
Averaging with the analogous inequality for \(R\) gives \eqref{eq:variance-mean}.

Equations~\eqref{eq:derivatives} and
\eqref{eq:variance-mean} imply
\[
 DQ(\lambda)=1-\frac{\sigma^2}{\mu}Q(\lambda)
 \le1-\frac{Q(\lambda)}{2(1+\lambda)}.
\]
Using \(\sqrt{\lambda/(1+\lambda)}\) as the integrating factor, this gives
\[
 \frac{d}{d\lambda}\left(
    \sqrt{\frac{\lambda}{1+\lambda}}\,Q(\lambda)\right)
 \le\frac1{\sqrt{\lambda(1+\lambda)}}.
\]
Integrating both sides from \(1\) to \(\lambda\ge1\), and using
\eqref{eq:logz-one}, gives
\begin{equation}\label{eq:q-bound}
 Q(\lambda)
 \le \sqrt{\frac{1+\lambda}{\lambda}}\left(\frac{3\log2}{\sqrt2}
       +2\log\frac{\sqrt\lambda+\sqrt{1+\lambda}}{1+\sqrt2}\right).
\end{equation}

Let $Q^*(\lambda)$ denote the upper bound on the right-hand side above. 
On the other hand, every subset of a fixed maximum independent set is independent,
so \(Z_F(\lambda)\ge (1+\lambda)^{\alpha(F)}\) for all $\lambda > 0$. Letting $\lambda = 12$, we get
\[
 \E_{12}X=\frac{\log Z_F(12)}{Q(12)}
 \geq\frac{\log(13)\alpha(F)}{Q^*(12)}
 \approx 0.6808\alpha(F) > \frac{17\alpha(F)}{25}.
\]
\end{proof}

\begin{proof}[Proof of Theorem~\ref{thm:central}]
Every integer in the stated interval lies between the endpoint
means by Proposition~\ref{prop:mean}.
Proposition~\ref{prop:curvature} therefore applies. Its size
threshold is uniform over the forest and fugacity.
\end{proof}

\section{Monotonicity of initial and final segments}\label{sec:ends}

For an independent set \(J\), write
\(e(J)=|V(F)\setminus N[J]|\) for its number of one-vertex
extensions. The standard double-counting identity
\begin{equation}\label{eq:extensions}
 (k+1)i_{k+1}(F)=
      \sum_{\substack{J\in\mathcal I(F),\, |J|=k}}e(J)
\end{equation}
counts pairs of nested independent sets of sizes \(k,k+1\).
This is the extension method used in~\cite[Section~2.2]{BG}.

We first show that an independent set of size $k$ has at most $2k$ incident edges on average.

\begin{lemma}\label{lem:degree}
Let \(T\) be a tree rooted at \(r\). The polynomial
\begin{equation}\label{eq:degree-polynomial}
 D_T:=\sum_{J\in\mathcal I(T)}
 \left(2|J|-\sum_{v\in J}d_T(v)-2\cdot\ind_{\{r\in J\}}\right)x^{|J|}
\end{equation}
has nonnegative coefficients.
Consequently, for \(0\le k\le\alpha(F)\) and a uniformly chosen
independent set \(J\) of size $k$ of any forest \(F\),
\begin{equation}\label{eq:degree-average}
 \E\sum_{v\in J}d_F(v)\le2k.
\end{equation}
\end{lemma}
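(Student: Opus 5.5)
The plan is induction on the rooted tree, splitting according to whether the root is occupied, and then deducing \eqref{eq:degree-average} by multiplying over components. First I rewrite the weight locally. For $v\neq r$ we have $d_T(v)=1+c(v)$, where $c(v)$ is the number of children of $v$, while $d_T(r)=c(r)$. Hence
\[
 w_T(J):=2|J|-\sum_{v\in J}d_T(v)-2\cdot\ind_{\{r\in J\}}
 =\sum_{v\in J}\bigl(1-c(v)\bigr)-\ind_{\{r\in J\}} .
\]
Let $r_1,\dots,r_m$ be the children of $r$, let $T_i$ be their descendant subtrees, and write $Z_i=Z_{T_i}$. Let $D_i$ denote the polynomial \eqref{eq:degree-polynomial} for $T_i$ rooted at $r_i$. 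All comparisons below are coefficientwise.

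\emph{Case $r\notin J$.} Here $J=\bigsqcup J_i$ with $J_i\in\mathcal I(T_i)$. The only change from the weights inside the $T_i$ is the root penalty at $r_i$, which disappears, so $w_T(J)=\sum_i\bigl(w_{T_i}(J_i)+\ind_{\{r_i\in J_i\}}\bigr)$. This case therefore contributes
\[
 \sum_i D_i\prod_{j\ne i}Z_j\;+\;\sum_i xZ_{T_i-N[r_i]}\prod_{j\ne i}Z_j .
\]

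\emph{Case $r\in J$.} Here $J=\{r\}\cup J'$ with $J'$ independent in the forest $T-N[r]$, whose components are rooted at the grandchildren $g$ of $r$. The root $r$ contributes $-m$. Each grandchild $g\in J'$ gains $+1$ relative to its rooted component, exactly as in the first case. Using induction on those components, the total contribution is at least
\[
 -m\,x\prod_i Z_{T_i-r_i}
\]
plus a polynomial with nonnegative coefficients.

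The deficit $m\,x\prod_i Z_{T_i-r_i}$ must be absorbed, one child at a time, by the $i$-th terms of the first case. Since $Z_j\ge Z_{T_j-r_j}$ coefficientwise and all factors have nonnegative coefficients, it suffices to have, for every rooted tree,
\begin{equation*}
 D_T+xZ_{T-N[r]}\;\ge\;xZ_{T-r}\qquad\text{coefficientwise.}
\end{equation*}
So I would run the induction on the strengthened pair of statements: $D_T\ge0$ together with this inequality. At a leaf, $D_T=0$ and $Z_{T-N[r]}=Z_{T-r}=1$, so both hold. The inductive step for the strengthened inequality repeats the same two-case decomposition, with $Z_{T-r}=\prod_i Z_i$ and $Z_{T-N[r]}=\prod_i Z_{T_i-r_i}$.

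\emph{Main obstacle.} I expect the real difficulty to be closing the induction for the strengthened inequality. If it fails as stated, the first adjustment to try is weakening the right side to $x\prod_iZ_{T_i-r_i}\cdot\theta$ for a suitable $\theta\le 1$. Failing that, I would replace the algebraic induction by an explicit injection. This would send each pair $(J,i)$ with $r\in J$ to the set $(J\setminus\{r\})\cup\{r_i\}$ after a local repair inside $T_i$. The repair would delete the occupied children of $r_i$ and credit their bonus $+1$s.

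\emph{Consequence \eqref{eq:degree-average}.} Root each component $T$ of $F$ arbitrarily. For $J\in\mathcal I(F)$, its restriction $J_T$ to each component satisfies
\[
 2|J_T|-\sum_{v\in J_T}d_F(v)=w_T(J_T)+2\cdot\ind_{\{r_T\in J_T\}}\ge w_T(J_T).
\]
Hence
\[
 \sum_{J\in\mathcal I(F)}\Bigl(2|J|-\sum_{v\in J}d_F(v)\Bigr)x^{|J|}\ \ge\ \sum_T D_T\prod_{T'\ne T}Z_{T'}\ \ge\ 0
\]
coefficientwise. Reading off the coefficient of $x^k$ and dividing by $i_k(F)$ gives \eqref{eq:degree-average}.
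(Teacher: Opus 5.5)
\textbf{There is a genuine gap in the proposal.} Your reduction of $D_T\ge0$ to the strengthened inequality $D_T+xZ_{T-N[r]}\ge xZ_{T-r}$ cannot work, because that inequality is false. For a single edge rooted at an endpoint, $D_T=0$, $Z_{T-N[r]}=1$ and $Z_{T-r}=1+x$, so it would assert $x\ge x+x^2$.

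The failure is not only in the choice of strengthening. Take the path $r-a-b$ rooted at $r$, so $m=1$. Your two cases give the lower bound $D_T\ge x-x(1+x)=-x^2$, whereas in fact $D_T=0$. The loss happens in the root-occupied case. There you keep only $D_g\ge0$ and discard the term $x\sum_g xZ_{T_g-N[g]}\prod_{g'\ne g}Z_{T_{g'}}$, which is the $+1$ each occupied grandchild $g$ earns because its parent $r_i$ is absent. That term is exactly what is needed: here it is the $+x^2$ coming from $J=\{r,b\}$, whose weight is $-1+1=0$.

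Your fallbacks do not repair this. Rescaling the right side by $\theta\le1$ cannot fix a coefficient that is $0$ on the left and positive on the right. The proposed ``repair'' that deletes the occupied children of $r_i$ changes $|J|$, so it no longer compares coefficients of the same power of $x$.

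\textbf{The missing idea.} In the root-occupied case, write $J=\{r\}\cup J'$. Then
\[
\#\{g\in J'\}-m\;\ge\;-\#\{i:\ J' \text{ contains no child of } r_i\}.
\]
In words, a child $r_i$ that has an occupied child pays its own $-1$ through that grandchild's bonus. The remaining deficit is therefore only
\[
x\sum_i Z_{T_i-N[r_i]}\prod_{j\ne i}Z_{T_j-r_j}.
\]
This is dominated by the root-absent bonus
\[
x\sum_iZ_{T_i-N[r_i]}\prod_{j\ne i}Z_{T_j}.
\]
Together with $D_{T_i}\ge0$ and $D_{T_g}\ge0$, this closes a strong induction on $D_T\ge0$ alone.

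The paper packages the same accounting into the companion polynomial $E_T$, whose nonnegativity says
\[
\sum_{J\in\mathcal I(T),\,r\notin J}\Bigl(2|J|-\sum_{v\in J}d_T(v)\Bigr)x^{|J|}+Z_{T-N[r]}\;\ge\;Z_{T-r}.
\]
This involves root-absent sets only and has no factor $x$. The paper then proves $D_T\ge0$ and $E_T\ge0$ simultaneously.

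Your local rewriting of the weight, the root-absent computation, and the deduction of \eqref{eq:degree-average} from $D_T\ge0$ are all correct and match the paper.
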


\begin{proof}
We prove coefficientwise nonnegativity of two polynomials
simultaneously. Define
\[
 E_T:=
 \sum_{J\in\mathcal I(T),\, r\notin J}
 \left(2|J|-\sum_{v\in J}d_T(v)-1\right)x^{|J|}
 +Z_{T-N[r]}(x).
\]
This records the companion inequality when the
root is absent; its correction term $Z_{T-N[r]}(x)$ cancels out the contribution
of an occupied root. 
Note that for a single-vertex tree both polynomials are zero.

Suppose the root has child trees \(T_1,\ldots,T_m\), with roots
\(r_1,\ldots,r_m\). 
We claim that the following two recurrences hold:
\begin{align}
 D_T={}&\sum_iD_{T_i}\prod_{j\ne i}Z_{T_j}
       +\sum_i(Z_{T_i}-Z_{T_i-r_i})
                    \left(\prod_{j\ne i}Z_{T_j}-\prod_{j\ne i}Z_{T_j-r_j}\right)
       +x\sum_iE_{T_i}\prod_{j\ne i}Z_{T_j-r_j}, \label{eq:d-recursion}\\
 E_T={}&\sum_iD_{T_i}\prod_{j\ne i}Z_{T_j}
       +\sum_{\substack{S\subseteq\{1,\ldots,m\}\\|S|\ge2}}
          (|S|-1)\prod_{i\in S}(Z_{T_i}-Z_{T_i-r_i})\prod_{j\notin S}Z_{T_j-r_j} .
        \label{eq:e-recursion}
\end{align}

We first verify \eqref{eq:d-recursion}. When \(r\notin J\), the total weight
\(2|J|-\sum_{v\in J}d_T(v)\) has generating polynomial
\begin{align}
 \sum_{J\in\mathcal I(T),\, r\notin J}
 \left(2|J|-\sum_{v\in J}d_T(v)\right)x^{|J|} &= 
 \sum_{J\in\mathcal I(T),\, r\notin J} \sum_i \left(2|J_i| - \sum_{v\in J_i} d_{T_i}(v) - \ind_{r_i\in J_i}\right) x^{|J|}
 \\
 &=\sum_i\bigl(D_{T_i}+Z_{T_i}-Z_{T_i-r_i}\bigr)\prod_{j\ne i}Z_{T_j}. \label{eq:absent-part}
\end{align}
where $J_i = J\cap V(T_i)$. Indeed, the degree of \(r_i\) in \(T\) is its degree in \(T_i\)
plus \(1\); this leaves one, rather than two, copies of its
occupation indicator in the first line above.

When \(r\in J\), all \(r_i\) are absent. The root's contribution
to the weight defining \(D_T\) is \(-m\), which we can distribute to be $-1$ for each of the $m$ subtrees.
By the definition of \(E_{T_i}\),
\[
 \sum_{J_i\in\mathcal I(T_i),\, r_i\notin J_i}
 \left(2|J_i|-\sum_{v\in J_i}d_{T_i}(v)-1\right)x^{|J_i|}
 =E_{T_i}-Z_{T_i-N[r_i]}.
\]
Since \(xZ_{T_i-N[r_i]}=Z_{T_i}-Z_{T_i-r_i}\), the occupied-root
contribution to \(D_T\) is exactly
\begin{align*}
 \sum_{J\in\mathcal I(T),\, r\in J}
 \left(2|J|-\sum_{v\in J}d_T(v) -2\right)x^{|J|}
 &= x \sum_{J\in\mathcal I(T-N[r])}\sum_i \left(2|J_i| - \sum_{v\in J_i} d_{T_i}(v) - 1\right) x^{|J|}
 \\
 &= x \sum_{i} \sum_{J_i\in\mathcal I(T_i),\, r_i\notin J_i}\left(2|J_i| - \sum_{v\in J_i} d_{T_i}(v) - 1\right) x^{|J_i|} \prod_{j\neq i} Z_{T_j-r_j}
 \\
 &= x\sum_iE_{T_i}\prod_{j\ne i}Z_{T_j-r_j}
       -\sum_i(Z_{T_i}-Z_{T_i-r_i})\prod_{j\ne i}Z_{T_j-r_j}.
\end{align*}
Adding the two contributions gives the desired recurrence for $D_T$.

For \(E_T\), we reuse \eqref{eq:absent-part} to obtain 
\begin{align*}
    E_T &= \left(\sum_i\bigl(D_{T_i}+Z_{T_i}-Z_{T_i-r_i}\bigr)\prod_{j\ne i}Z_{T_j}\right) - Z_{T-r} + Z_{T-N[r]} \\
    &= \sum_i D_{T_i} \prod_{j\ne i}Z_{T_j} + \left(\sum_i\bigl(Z_{T_i}-Z_{T_i-r_i}\bigr)\prod_{j\ne i}(Z_{T_i-r_i}+(Z_{T_i}-Z_{T_i-r_i}))\right) \\
    &\hspace{4cm} - \left(\prod_i(Z_{T_i-r_i}+(Z_{T_i}-Z_{T_i-r_i})) - \prod_iZ_{T_i-r_i}\right).
\end{align*}
After expanding, for each nonempty set \(S\), $\prod_{i\in S}(Z_{T_i}-Z_{T_i-r_i})\prod_{j\notin S}Z_{T_j-r_j}$ is counted
\(|S|\) times in
\(\sum_i(Z_{T_i}-Z_{T_i-r_i})\prod_{j\ne i}Z_{T_j}\), and once in
\(\prod_iZ_{T_i}-\prod_iZ_{T_i-r_i}\). The remaining coefficient is
\(|S|-1\), proving \eqref{eq:e-recursion}.

All terms on the right sides of \eqref{eq:d-recursion} and
\eqref{eq:e-recursion} have nonnegative coefficients if 
all $D_{T_i}$ and $E_{T_i}$ do: in particular
\(\prod_{j\ne i}Z_{T_j}-\prod_{j\ne i}Z_{T_j-r_j}\) does, since
each \(Z_{T_j}-Z_{T_j-r_j}\) does. Simultaneous induction proves nonnegativity of $D_T$.

For a forest $F$, choose one root $r_i$ in each component $T_i$. Consider the sum 
\(\sum_{i} D_{T_i} \prod_{j\neq i} Z_{T_j},\) 
which clearly has nonnegative coefficients. The coefficient of
\(x^k\) in this sum is
\[
 \sum_{J\in\mathcal I(F),\,|J|=k}
 \left(2k-\sum_{v\in J}d_F(v)
        -2\sum_{i} \ind_{\{r_i\in J\}}\right)\ge0.
\]
In particular, the sum of $2k - \sum_{v\in J} d_F(v)$ is nonnegative, which proves \eqref{eq:degree-average}.
\end{proof}

\begin{proposition}\label{prop:ends}
Every nonempty forest \(F\) with \(n\) vertices satisfies
\begin{equation}\label{eq:prefix}
 i_0(F)\le i_1(F)\le\cdots\le i_{\lceil n/4\rceil}(F).
\end{equation}
Every bipartite graph \(F\) satisfies
\begin{equation}\label{eq:tail}
 i_{\lceil(2\alpha-1)/3\rceil}(F)\ge
 i_{\lceil(2\alpha-1)/3\rceil+1}(F)\ge\cdots\ge i_\alpha(F),
 \qquad \alpha=\alpha(F).
\end{equation}
\end{proposition}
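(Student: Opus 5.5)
For the prefix bound \eqref{eq:prefix}, the plan is to use the extension identity \eqref{eq:extensions} together with Lemma~\ref{lem:degree}. Fix \(k\) with \(0\le k<\lceil n/4\rceil\), so \(k+1\le\lceil n/4\rceil\), and note \(k<n/4\). For an independent set \(J\) of size \(k\) in a forest we have
\[
 |N[J]|\le k+\sum_{v\in J}d_F(v),
\]
hence \(e(J)\ge n-k-\sum_{v\in J}d_F(v)\). Averaging over a uniform \(J\) of size \(k\) and applying \eqref{eq:degree-average} gives
\[
 \E\, e(J)\ge n-3k.
\]
By \eqref{eq:extensions},
\[
 (k+1)\,i_{k+1}(F)\ge (n-3k)\,i_k(F).
\]
It then suffices that \(n-3k\ge k+1\), i.e. \(4k+1\le n\). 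This holds since \(k<n/4\) and both sides are integers: \(k\le\lceil n/4\rceil-1\) gives \(4k\le 4\lceil n/4\rceil-4\le n-1\). (If \(k+1>\alpha(F)\) the indices run past \(\alpha\), but then \(i_{k+1}\ge i_k\) would fail, so we must check that \(\alpha(F)\ge\lceil n/4\rceil\). This is true because a forest is bipartite, so \(\alpha\ge n/2\).) The degenerate case \(k=0\) is trivial since \(i_1=n\ge1\).

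For the tail bound \eqref{eq:tail}, the plan is a reverse double count on the bipartite graph \(F\) with classes \(L,R\), in the style of Levit--Mandrescu. Fix a maximum independent set \(A\) of size \(\alpha\). For \(k\ge\lceil(2\alpha-1)/3\rceil\) we aim to show \(i_k\ge i_{k+1}\) by bounding pairs \((J,J')\) with \(J\subset J'\), \(|J|=k\), \(|J'|=k+1\). Each \(J'\) has exactly \(k+1\) subsets \(J\), so the number of pairs is \((k+1)i_{k+1}\). Each \(J\) has \(e(J)\) extensions, so the pair count also equals \(\sum_J e(J)\). It therefore suffices to show \(e(J)\le k+1\) on average, or pointwise.

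The pointwise bound comes from König--Egerváry. Since \(F\) is bipartite, \(\alpha+\nu=n\) where \(\nu\) is the matching number, and a maximum matching \(M\) covers all vertices outside some maximum independent set. The vertices not in \(N[J]\) induce a bipartite graph \(F-N[J]\), and \(J\) together with any independent set of \(F-N[J]\) is independent, so \(\alpha(F-N[J])\le\alpha-k\). Applying König to \(F-N[J]\) gives
\[
 e(J)=|V(F-N[J])|\le 2\,\alpha(F-N[J])\le 2(\alpha-k).
\]
When \(k\ge(2\alpha-1)/3\) we have \(2(\alpha-k)\le k+1\), which yields \((k+1)i_{k+1}\le\sum_J e(J)\le(k+1)i_k\).

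The main obstacle is the step \(|V(H)|\le 2\alpha(H)\) for bipartite \(H\). It follows from the fact that the larger color class is independent. It is the only real input, and it is what makes the threshold exactly \((2\alpha-1)/3\). Careful integer handling of the ceiling should again be routine.
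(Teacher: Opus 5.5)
Your proof is correct and follows the paper's argument essentially step for step. The prefix uses the extension identity \eqref{eq:extensions} together with \eqref{eq:degree-average} to get $\E\, e(J)\ge n-3k\ge k+1$, and the tail uses $e(J)\le 2\alpha(F-N[J])\le 2(\alpha-k)\le k+1$, where the first inequality comes from the larger color class of $F-N[J]$. The detour through K\"onig--Egerv\'ary, maximum matchings and the fixed maximum independent set $A$ is unnecessary. The worry about $k+1>\alpha(F)$ also needs no separate check: the inequality $(k+1)i_{k+1}\ge(n-3k)i_k>0$ already forces $i_{k+1}>0$.
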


The second assertion above is the bipartite case of the decreasing-tail
theorem of Levit and Mandrescu~\cite{LM}; the proof below uses
only extension counting.

\begin{proof}
For a uniform independent set $J$ of size $k$, \(J\cap N(J)=\varnothing\)
and \(|N(J)|\le\sum_{v\in J}d_F(v)\). Lemma~\ref{lem:degree}
and \eqref{eq:extensions} give
\[
 \frac{(k+1)i_{k+1}(F)}{i_k(F)}
 =\E e(J)\ge n-k-\E\sum_{v\in J}d_F(v)\ge n-3k.
\]
If \(k\le\lfloor(n-1)/4\rfloor\), then \(n-3k\ge k+1\).
This proves \eqref{eq:prefix} since
\(\lfloor(n-1)/4\rfloor+1=\lceil n/4\rceil\).

For the tail, the bipartite graph $F'$ induced by
\(V(F)\setminus N[J]\) has
\(\alpha(F')\leq \alpha-k\): any independent set in it can be adjoined to \(J\).
On the other hand, a bipartite graph on \(e(J)\) vertices has an independent set
of size at least \(e(J)/2\), so \(e(J)\le2(\alpha-k)\).
Using \eqref{eq:extensions} again gives
\[
 (k+1)i_{k+1}(F)\le2(\alpha-k)i_k(F).
\]
For \(k\ge\lceil(2\alpha-1)/3\rceil\), we have
\(2(\alpha-k) \leq k+1\), which proves \eqref{eq:tail}.
\end{proof}

\begin{proof}[Proof of Theorem~\ref{thm:main}]
Take $n$ sufficiently large that
Theorem~\ref{thm:central} applies. The independence sequence is
nondecreasing through $\lceil n/4\rceil$, strictly log-concave
at indices from $\lceil n/5\rceil$ to $\lfloor17\alpha(F)/25\rfloor$,
and nonincreasing from $\lceil(2\alpha(F)-1)/3\rceil$ onward.

Since $\alpha(F)\ge n/2$ and $17/25>2/3$, for all sufficiently large $n$
these ranges overlap:
\[
\left\lceil\frac n5\right\rceil
\le \left\lceil\frac n4\right\rceil
< \left\lceil\frac{2\alpha(F)-1}{3}\right\rceil
\le \left\lfloor\frac{17\alpha(F)}{25}\right\rfloor.
\]
Log-concavity makes the successive coefficient ratios nonincreasing
across the middle range. Together with the monotonicity of the two
ends, this prevents a decrease followed by an increase.
Hence the entire independence sequence is unimodal.
\end{proof}

\section{Concluding remarks}\label{sec:Conclude}
We emphasize that the original conjecture for trees and forests of all sizes remains open. 
We end with two additional questions, concerning the location of the
peak and the range of log-concavity.

\vspace{-15pt}

\paragraph{Peak interval.} 
The proof of Theorem~\ref{thm:main} gives bounds on the location of the peak(s) in the independence polynomial $Z_F(x)$ for all sufficiently large forests: define
\[a_p=\lim_{n\rightarrow \infty} \inf_{\text{$n$-vertex forest }F} \left\{ \frac{k}{\alpha(F)}:\ i_{k-1}(F) \leq i_k(F) \geq i_{k+1}(F) \right\},
\] 
\[b_p=\lim_{n\rightarrow \infty} \sup_{\text{$n$-vertex forest }F} \left\{ \frac{k}{\alpha(F)}:\ i_{k-1}(F) \leq i_k(F) \geq i_{k+1}(F) \right\}.
\] 
Proposition~\ref{prop:ends} shows $a_p\ge 1/4$ and $b_p\le 2/3$. Moreover, the well-covered spider graphs from~\cite{LM03} has their peaks at $2\alpha/3 - O(1)$, so $b_p = 2/3$ exactly. It still remains to determine the value of $a_p$. Does the analog of $a_p$ for trees has the same value as for forests? 

\vspace{-15pt}

\paragraph{Log-concavity interval.} 
Define $[a_{lc},b_{lc}] \subseteq [0,1]$ to be the largest interval such that for every sufficiently large forest $F$, 
the independence polynomial $Z_F(x)$ has a log-concave sequence of coefficients between indices $(a_{lc}+o(1))\alpha(F)$ and $(b_{lc}-o(1))\alpha(F)$. 

As remarked at the end of Section~\ref{sec:hardcore}, 
one can replace the lower bound $1/4$ of fugacity $\lambda$ with any sufficiently small positive constant: 
the only places where a positive lower bound was needed is for the constants $c$ in 
Proposition~\ref{prop:clt} and Lemma~\ref{lem:fourier}. 
Therefore, our proof of Theorem~\ref{thm:central} goes through with $k\geq \varepsilon n$ 
for any fixed $\varepsilon > 0$ (instead of $k\geq  n/5$). Hence $a_{lc} = 0$ and $b_{lc} \geq 17/25$. 

As mentioned previously, the constant $17/25$ is not optimal, and the exact value of $b_{lc}$ is not known. For all examples we know of, the failures to log-concavity appear only at degree $(1-o(1))\alpha(F)$, 
so it is possible that $b_{lc}=1$.


\printbibliography

\end{document}